\def\N{{\mathbb{N}}}
\def\R{{\mathbb{R}}}
\def\1{{\mathbb{1}}}
\theoremstyle{plain}
\newtheorem{theorem}{Theorem}
\newtheorem{proposition}{Proposition}
\newtheorem{definition}{Definition}
\newtheorem{lemma}{Lemma} 
\newtheorem{corollary}{Corollary}
\theoremstyle{remark}
\newtheorem{remark}{Remark}
\newtheorem{example}{Example}
\title[Alternative theorem and applications]{Alternative theorem for sequences of functions and applications to optimisation}
\author{Mohammed Bachir, Rongzhen Lyu}
\begin{document}

\date{\today} 
\subjclass{}
\address{Laboratoire SAMM 4543, Universit\'e Paris 1 Panth\'eon-Sorbonne, France}

\email{Mohammed.Bachir@univ-paris1.fr}
\email{Rongzhen.Lyu@etu.univ-paris1.fr}

\begin{abstract}
We present a new alternative theorems for sequences of functions. As applications, we extend recent results in the literature related to first-order necessary conditions for optimality problems. Our contributions involve extending well-known results, previously established for a finite number of inequality constraints to a countable number of inequality constraints. This extension is achieved using the Dini differentiability concept which is more general than Fréchet or Gâteaux differentiability. We will illustrate our results by giving examples of optimisation problems with a finite or countable number of inequality constraints where the functions are not Gateaux-differentiable but only upper Dini-differentiable.
\end{abstract}

\maketitle
{\bf Keywords:} Alternative theorem; Lagrange multiplier rule; Upper Dini derivative; Optimization;  Countable inequality constraints; Infinite dimension.

{\bf 2020 Mathematics Subject Classiﬁcation:} \subjclass{Primary  90C30, 49K99, 46N10 Secondary 90C26, 90C48} 
\tableofcontents

\vskip5mm

\section{Introduction} \label{S1}

There is a vast body of literature on alternative theorems, with numerous studies addressing alternatives in both convex and non-convex frameworks and their applications to optimization theory. A comprehensive survey on this topic by Dinh and Jeyakuma can be found in \cite{DJ}, along with many references (see also \cite{Cr1, Cr2, Je1, JON, Ld, RG0, RG1}, although this list is not exhaustive). The earliest version of a nonlinear alternative theorem was introduced by Fan, Glicksberg, and Hoffman \cite{FGH}, and is stated as follows:  Let $C$ be a convex set of some real vector space and $f_i: C \to \R$, $i=1,..., n$ be convex functions. Then, either the system $x\in C$, $f_i(x) <0, i=1,...,n$ has a solution or there exists $\lambda^*\in \R_+^n$, $\lambda^*\neq 0$ such that $\sum_{i=1}^n \lambda^*_i f_i(x)\geq 0$ for all $x\in C$. 
\vskip5mm
The aim of this article is to present new versions of the alternative theorem and to derive applications in both convex and non-convex optimization theory, generalizing recent results by Blot in \cite{Bl}, Yilmaz in \cite{Yi}, Bachir and Lyu in \cite{BL} and the old result  of Pourciau in \cite{Po}, from a finite number of inequality constraints to a countable number of inequality constraints and by using only the upper Dini differentiability which is more general than the Gateaux differentiability. Our main results are Theorem ~\ref{thm4} and Theorem ~\ref{thm} and their corollaries.
\vskip5mm
As a consequence of Theorem \ref{thm4}, we obtain an alternative theorem that applies to invex Lipschitz functions (Corollary \ref{cor3}). Invexity is a concept that generalizes convexity and was introduced by Hanson \cite{Ha} (see Section \ref{invex} for further details). Theorem \ref{thm} gives a direct extension of the result of Fan, Glicksberg, and Hoffman in \cite{FGH}, where a finite familly of convex functions is replaced by a sequence of convex functions pointwize converging to some function.  We then apply our results to optimization problems in the context of Lagrange multipliers. Specifically, we consider the following problem: Let $E$ be a real normed vector space, $\Omega$ be a nonempty open subset of $E$, and $(f_n)$ be a sequence of real-valued functions on $\Omega$,
\begin{equation*}
(\mathcal{P}_\infty)
\left \{
\begin{array}
[c]{l}
\min f_0\\
x\in \Omega\\
\forall n\geq 1, f_{n}(x) \leq 0. \\
\end{array}
\right. 
\end{equation*}
 We extend the recent results from \cite{BL, Bl, Yi} on first-order necessary conditions for optimality problems, moving from finitely many inequality constraints to a countable number of inequality constraints (Corollary \ref{opt}). This extension is achieved using the concept of upper Dini differentiability, including the Gateaux-differentiable case. Note that the recent work in \cite{BB} also deals with optimisation problems with an infinite number of inequality constraints in the framework where the functions are Gateaux-differentiable at the optimal point. Finally, we extend the result of Pourciau established in \cite[Theorem 6, ~p. 445]{Po} from finite number of convex functions in finite dimension to sequences of convex functions in a real vector space (Corollary \ref{pourc0} and Corollary \ref{pourc}).

\vskip5mm

This article is organised as follows. In Section \ref{SS2}, we present some intermediate results that we will use in the rest of the paper. In Section \ref{SS3}, we  give our main results Theorem ~\ref{thm4} and Theorem ~\ref{thm} and their  corollaries. Finally, in Section \ref{SS6}, we give two examples  to illustrate optimisation problems with a finite or infinite number of inequality constraints, in the absence of Gateaux differentiability.

\section{A few intermediate results.} \label{SS2}

  Let $E$ be a real normed vector space and $\Omega$ be a nonempty open subset of $E$. Recall that a function $f: \Omega \to \R$ is said to be upper Dini-differentiable at $x \in \Omega$ if  $D^+ f(x)(u)$ is  finite, for all directions $u \in E$, where 
\[
D^+ f(x)(u) :=\limsup_{t\to 0^+} \frac{f(x+tu)-f(x)}{t}.
\]
We always assume that  $D^+f(x)(0) = 0$. It is easy to see that any real-valued Lipschitz function in some neighborhood of $x\in E$ is  upper Dini-differentiable at $x$ and moreover the map  $u\mapsto D^+ f(\hat{x})(u)$ is Lipschitz on $E$ with the same constant of Lipschitz of $f$ at $x$ (see for instance \cite{BL}). Recall that  a function $p: E \rightarrow \mathbb{R}$ is sublinear if for all $x, y \in E$, $p(x+y) \leq p(x) + p(y)$, and for all $x \in E$, for all $\lambda \in \R^+$, $p(\lambda x) = \lambda p(x)$. The upper Dini-derivative of a function at some point is always positively homogenous but it is not sublinear in general. All functions in our main results will be assumed to have a sublinear upper Dini-differential. It is known from \cite[Lemma 1.2]{Ph}, that for a convex function $f$ on some convex neighbourhood of $x$, the ``right hand " directional derivative of $f$ at $x$ in the direction $u\in E$ defined by 
$$d^+ f(\hat{x})(u):=\lim_{t\to 0^+} \frac{f(\hat{x}+tu)-f(\hat{x})}{t},$$ 
exists for all $u\in E$ and the map $u\mapsto d^+ f(x)(u)$ is sublinear. If moreover $f$ is continuous at $x$, then $u\mapsto d^+ f(x)(u)$ is also continuous (\cite[Corollary 1.7]{Ph}).  Moreover, the following formula hold (see \cite[p. 6]{Ph})
\begin{eqnarray} \label{star0}
d^+f(x)(y-x)\leq f(y)-f(x), \forall y \in E.
\end{eqnarray}
Finally, we see that  if $f$ is convex on some convex neighborhood of $x \in E$, then 
\begin{eqnarray} \label{star}
D^+ f(x)(u) =d^+ f(x)(u), \forall u\in E.
\end{eqnarray}
 Notice also that in the case when $f$ is Gateaux-differentiable at $x$,  then  $D^+ f(x)(u)=d_G f (x)$  the classical Gateaux-differential of $f$ at $x$, which is (by definition) a linear continuous functional on $E$. 

Let $E$ be a real vector normed space, $\Omega \subset E$ an open set, $\hat{x}\in \Omega$, $B(\hat{x},r)$ the open unit ball centered at $\hat{x}$ with radius $r>0$. We denote $\|\cdot\|_{\hat{x},r}$  the seminorm defined on the space of real-valued Lipschitz functions $h$ on $B(\hat{x},r)\cap \Omega$  by
\begin{eqnarray*}\label{eq1}
\| h \|_{\hat{x},r} = \sup_{_{\substack{x, y \in B(\hat{x},r)\cap \Omega \\ x \neq y}}} \frac{|h(x) - h(y)| }{\| x-y \|}.
\end{eqnarray*} 
By convention, $r$ can take the value $+\infty$ (if $h$ is globaly Lipschitz on $\Omega$), in this case we denote $\|h\|_{\textnormal{Lip}}=\| h\|_{\hat{x},+\infty}$ the constant of Lipschitz of $h$ on $\Omega$. 
\vskip5mm
\paragraph{\bf The property $(H)$} For all $n\in \N$, let $h_n:\Omega \to \R$ be a  Lipschitz function on $B(\hat{x},r)\cap \Omega$. We say that a sequence $(h_n)$ satisfies the property $(H)$ in $E$ at $\hat{x}$ if the following assertions hold:

$(i)$ $D^+h_n(\hat{x})$ is sublinear for all $n\in \N$. 

$(ii)$ There exists a function $h_\infty$ (Lipschitz on $B(\hat{x},r)\cap \Omega$), such that $h_{\infty}(\hat{x})=\lim_{n\to+\infty} h_n (\hat{x})$ and $\|h_n-h_\infty\|_{\hat{x},r}\to 0$ when $n\to +\infty$.
\vskip5mm
Notice that from the part $(ii)$ we have, 

$\bullet$ $h_{\infty}(x)=\lim_{n\to+\infty} h_n (x)$, $\sup_{n\in \N}|f_n(x)|<+\infty$ for all $x\in B(\hat{x},r)\cap \Omega$ and $\sup_{n\in \N}\|h_n\|_{\hat{x},+\infty}<+\infty$,

$\bullet$ $\forall \epsilon > 0$, $\exists N^{\epsilon} \in \mathbb{N}$ s.t. $\forall k \geq N^{\epsilon}$, $\forall u\in E\setminus \lbrace 0\rbrace$, $\forall t \in (0, \frac{r}{\|u\|})$,
\begin{eqnarray}\label{eq1}
\frac{h_k (\hat{x}+t u)-h_k (\hat{x})}{t} - \epsilon \| u \| \leq \frac{h_\infty (\hat{x}+t u)-h_\infty (\hat{x})}{t} \leq \frac{h_k (\hat{x}+t u)-h_k (\hat{x})}{t} + \epsilon \| u \|.
\end{eqnarray}

\vskip5mm
We are going to give some propositions and lemmas that will serve to prove Theorem \ref{thm4}.

\begin{proposition} \label{prop1} Let us assume that a sequence $(f_n)$ satisfies the property $(H)$ at $\hat{x}$. Then, for all $u\in E$, we have $$  D^+ f_\infty (\hat{x})(u) = \lim_{k\to +\infty}  D^+ f_k (\hat{x})(u).$$
Consequently, $  D^+ f_\infty (\hat{x})$ is Lipschitz and sublinear. If moreover, we assume that $f_k$ is Gateaux-differentiable at $\hat{x}$ for all $k\in \N$, then $f_\infty$ is also Gateaux-differentiable at $\hat{x}$ and $  D^+ f_\infty (\hat{x})=d_G f_{\infty}(\hat{x})$.
\end{proposition}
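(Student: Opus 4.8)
The plan is to extract the pointwise convergence of the upper Dini derivatives directly from the two-sided estimate \eqref{eq1} recorded just after the definition of property $(H)$, and then to deduce the three stated consequences in turn.

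For the central claim, fix $u \in E \setminus \{0\}$; the case $u = 0$ is trivial since every upper Dini derivative vanishes at $0$. Let $\epsilon > 0$ and let $N^\epsilon$ be as in \eqref{eq1}. For every $k \geq N^\epsilon$ and every $t \in (0, r/\|u\|)$ the estimate reads
\[
\frac{f_k(\hat{x}+tu)-f_k(\hat{x})}{t} - \epsilon\|u\| \leq \frac{f_\infty(\hat{x}+tu)-f_\infty(\hat{x})}{t} \leq \frac{f_k(\hat{x}+tu)-f_k(\hat{x})}{t} + \epsilon\|u\|.
\]
Since $\pm\,\epsilon\|u\|$ do not depend on $t$, applying $\limsup_{t\to 0^+}$ to each side (which preserves nonstrict inequalities and absorbs additive constants) gives
\[
D^+ f_k(\hat{x})(u) - \epsilon\|u\| \leq D^+ f_\infty(\hat{x})(u) \leq D^+ f_k(\hat{x})(u) + \epsilon\|u\|,
\]
i.e. $|D^+ f_\infty(\hat{x})(u) - D^+ f_k(\hat{x})(u)| \leq \epsilon\|u\|$ for all $k \geq N^\epsilon$. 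As $\epsilon$ is arbitrary, this proves $D^+ f_\infty(\hat{x})(u) = \lim_{k\to+\infty} D^+ f_k(\hat{x})(u)$, the convergence being uniform on bounded subsets of $E$.

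Sublinearity of $D^+ f_\infty(\hat{x})$ then follows by passing to the limit in assertion $(i)$: for $u,v \in E$, the subadditivity $D^+ f_k(\hat{x})(u+v) \leq D^+ f_k(\hat{x})(u) + D^+ f_k(\hat{x})(v)$ survives the (now established) limit, while positive homogeneity holds automatically for any upper Dini derivative. The Lipschitz property of $u \mapsto D^+ f_\infty(\hat{x})(u)$ is then immediate from the fact recalled in the introduction applied to the Lipschitz function $f_\infty$ of part $(ii)$ (equivalently, it follows from the uniform convergence above together with $\sup_k \|f_k\|_{\hat{x},+\infty} < +\infty$).

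Finally, assume each $f_k$ is Gateaux-differentiable at $\hat{x}$, so that $D^+ f_k(\hat{x}) = d_G f_k(\hat{x})$ is linear and the one-sided quotients of $f_k$ genuinely converge. Repeating the argument with $\liminf_{t\to 0^+}$ in place of $\limsup_{t\to 0^+}$ yields $|D_+ f_\infty(\hat{x})(u) - d_G f_k(\hat{x})(u)| \leq \epsilon\|u\|$, where $D_+$ denotes the lower Dini derivative; comparing with the upper estimate already obtained gives $|D^+ f_\infty(\hat{x})(u) - D_+ f_\infty(\hat{x})(u)| \leq 2\epsilon\|u\|$ for all $\epsilon$, whence the right-sided limit $\lim_{t\to 0^+} t^{-1}\big(f_\infty(\hat{x}+tu)-f_\infty(\hat{x})\big)$ exists and equals $D^+ f_\infty(\hat{x})(u)$. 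The main obstacle is this last upgrade from a one-sided to a two-sided limit: here I would use that $D^+ f_\infty(\hat{x})$, being a pointwise limit of the linear functionals $d_G f_k(\hat{x})$, is itself linear and in particular odd, so the left-sided quotients in direction $u$ — which are the negatives of the right-sided quotients in direction $-u$ — converge to $-D^+ f_\infty(\hat{x})(-u) = D^+ f_\infty(\hat{x})(u)$. Thus the full limit exists and defines a linear functional, continuous by the Lipschitz bound, which is exactly $d_G f_\infty(\hat{x}) = D^+ f_\infty(\hat{x})$; everything apart from this oddness argument is routine manipulation of \eqref{eq1}.
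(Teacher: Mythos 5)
Your proof is correct and, for the central claim, is identical to the paper's: both apply $\limsup_{t\to 0^+}$ to the two-sided estimate \eqref{eq1} to get $|D^+ f_\infty(\hat{x})(u) - D^+ f_k(\hat{x})(u)| \leq \epsilon\|u\|$ for $k \geq N^\epsilon$, and both obtain sublinearity as a pointwise limit of sublinear maps and the Lipschitz property from the Lipschitz character of $f_\infty$. The only divergence is in the final Gateaux statement: the paper's proof is terser there, arguing only that $D^+ f_\infty(\hat{x})$ is a pointwise limit of the linear continuous functionals $d_G f_k(\hat{x})$, hence linear and continuous, and concluding $D^+ f_\infty(\hat{x}) = d_G f_\infty(\hat{x})$ without explicitly checking that the two-sided directional limit $\lim_{t\to 0}t^{-1}\bigl(f_\infty(\hat{x}+tu)-f_\infty(\hat{x})\bigr)$ exists. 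Your additional $\liminf$ comparison (showing the upper and lower Dini derivatives of $f_\infty$ coincide) and the oddness argument converting left-sided quotients in direction $u$ into right-sided quotients in direction $-u$ fill exactly that gap, so your version is slightly more complete than the one printed in the paper.
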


\begin{proof} Using the formula in $(\ref{eq1})$ we deduce:
$\forall \epsilon > 0$, $\exists N^{\epsilon} \in \mathbb{N}$ s.t $\forall k \geq N^{\epsilon}$, $\forall u\in E$,
\begin{eqnarray*}
D^+f_k (\hat{x})(u) - \epsilon \| u \| \leq D^+f_\infty (\hat{x})(u) \leq D^+f_k (\hat{x})(u)+ \epsilon \| u \|,
\end{eqnarray*}
which gives the first part of the proposition. Moreover, $D^+ f_\infty (\hat{x})$ is Lipschitz (since $f_\infty $ is Lipschitz) and sublinear as a pointwize limit of a sequence of sublinear functions. If moreover we assume that $f_k$ is Gateaux-differentiable at $\hat{x}$ for all $k\in \N$, then $D^+ f_k (\hat{x})=d_G f_k(\hat{x})$ is linear continuous and so $  D^+ f_\infty (\hat{x})$ is linear continuous as a pointwize limit of a sequence of linear continuous functionals. Hence, $  D^+ f_\infty (\hat{x})=d_G f_{\infty}(\hat{x})$.
\end{proof}

We need the following lemma  which we will use in Proposition \ref{prop2}.

\begin{lemma} \label{lem1} Let $N\geq 0$ be an integer number and $h_0, h_1, ..., h_N : (0,+\infty) \rightarrow \mathbb{R}$ be bounded from below non-decreasing functions. Then we have 
\[
\max_{0\leq i\leq N} \inf_{s > 0} h_i (s) = \inf_{s > 0} \max_{0\leq i\leq N} h_i (s).
\]
\end{lemma}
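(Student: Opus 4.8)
The plan is to prove the two inequalities separately. The inequality $\max_{0\le i\le N} \inf_{s>0} h_i(s) \le \inf_{s>0} \max_{0\le i\le N} h_i(s)$ is the general max-min inequality and needs no monotonicity: for every index $i$ and every $s>0$ one has $\inf_{s'>0} h_i(s') \le h_i(s) \le \max_{0\le j\le N} h_j(s)$, so taking the infimum over $s$ on the right-hand side preserves the bound and gives $\inf_{s'>0} h_i(s') \le \inf_{s>0}\max_{0\le j\le N} h_j(s)$ for each fixed $i$; taking the maximum over $i$ then yields this direction.

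The substantive direction is the reverse inequality $\inf_{s>0} \max_{0\le i\le N} h_i(s) \le \max_{0\le i\le N} \inf_{s>0} h_i(s)$, and this is where both monotonicity and the finiteness of $N$ enter. I would set $m_i := \inf_{s>0} h_i(s)$ and $M := \max_{0\le i\le N} m_i$, which are real numbers since each $h_i$ is bounded from below. Fix $\epsilon > 0$. By definition of the infimum, for each $i$ there exists $s_i > 0$ with $h_i(s_i) < m_i + \epsilon \le M + \epsilon$. Now put $s^* := \min_{0\le i\le N} s_i$, which is strictly positive because it is the minimum of finitely many positive numbers. Since each $h_i$ is non-decreasing and $s^* \le s_i$, we get $h_i(s^*) \le h_i(s_i) < M + \epsilon$ for every $i$, hence $\max_{0\le i\le N} h_i(s^*) < M + \epsilon$ and therefore $\inf_{s>0}\max_{0\le i\le N} h_i(s) < M + \epsilon$. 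Letting $\epsilon \to 0$ gives $\inf_{s>0} \max_i h_i(s) \le M = \max_i \inf_{s>0} h_i(s)$, and combining the two inequalities establishes equality.

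The main obstacle, and the only place the hypotheses are genuinely used, is producing a single argument $s^*$ at which all the $h_i$ are simultaneously near their infima. Monotonicity guarantees that decreasing the argument only decreases each $h_i$, so the common point $s^* = \min_i s_i$ is at least as good as each individual near-optimal $s_i$ for every function at once; the finiteness of $N$ is essential to keep this minimum strictly positive (the statement would generally fail for infinitely many functions, where $\inf_i s_i$ could be $0$). An induction on $N$ is a possible alternative, with base case a single function (both sides equal $\inf_{s>0} h_0(s)$) and inductive step reducing to the two-function identity $\inf_{s>0}\max(g,h) = \max(\inf_{s>0} g,\inf_{s>0} h)$ for non-decreasing bounded-below $g,h$; but that two-function step is proved by exactly the $s^* = \min(s_g,s_h)$ argument, so the direct proof above is preferable as it handles all $N$ simultaneously.
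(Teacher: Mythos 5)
Your proof is correct and uses essentially the same idea as the paper: pick a near-minimizer $s_i$ for each $h_i$, pass to $s^*=\min_i s_i>0$, and use monotonicity to make all functions simultaneously small at $s^*$. The only cosmetic difference is that the paper first reduces to $N=1$ by induction and then runs this argument for two functions, whereas you handle all $N+1$ functions at once — a variant you yourself note and correctly judge to be interchangeable.
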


\begin{proof} By induction, it suffices to give the proof with $N=1$.  It is not difficult to see that 
\begin{eqnarray*}
\max \{ \inf_{s > 0} h_0 (s) , \inf_{s > 0} h_1 (s) \} \leq \inf_{s > 0} \max \{ h_0 (s) , h_1 (s) \}.
\end{eqnarray*}
We are going to prove the inequality: 
\[
\max \{ \inf_{s > 0} h_0 (s) ,\inf_{s > 0} h_1 (s) \} \geq \inf_{s > 0} \max \{ h_0 (s) , h_1 (s) \}.
\]
Indeed, for all $\epsilon > 0$, there are $ s^{\epsilon}_0 > 0$ and $s^{\epsilon}_1 > 0$, such that $\inf_{s > 0} h_0(s) > h_0(s^{\epsilon}_0) - \epsilon / 2 $ and $\inf_{s > 0} h_1(s) > h_1(s^{\epsilon}_1) - \epsilon / 2 $. Then we have
\begin{eqnarray}\label{eq4}
\max \{ \inf_{s > 0} h_0 (s) , \inf_{s > 0} h_1 (s) \} > \max \{ h_0(s^{\epsilon}_0) , h_1(s^{\epsilon}_1) \} - \epsilon
\end{eqnarray}
We take $r_\epsilon = \min \{ s^{\epsilon}_0 , s^{\epsilon}_1 \}>0$. Since $h_0, h_1$ are non-decreasing 
\begin{eqnarray}\label{eq5}
\max \{ h_0(s^{\epsilon}_0) , h_1(s^{\epsilon}_1) \} \geq \max \{ h_0(r_\epsilon) , h_1(r_\epsilon) \} \geq \inf_{s > 0} \max \{ h_0 (s) , h_1 (s) \}
\end{eqnarray}
By combining inequalities $\ref{eq4}$, $\ref{eq5}$ and by passing to the limit when $\epsilon \to 0$, we get that
\[
\max \{ \inf_{s > 0} h_0 (s) , \inf_{s > 0} h_1 (s) \} \geq \inf_{s > 0} \max \{ h_0 (s) , h_1 (s) \}.
\]
\end{proof}

\begin{proposition} \label{prop2} Let us assume that a sequence $(f_n)$ satisfies the property $(H)$ at $\hat{x}$. Then, for all $u\in E$,
$$\limsup_{t\to 0^+} \sup_{k\in \mathbb{N}}\frac{f_k(\hat{x}+t u)-f_k(\hat{x})}{t} =  \sup_{k\in \mathbb{N}}D^+f_k (\hat{x})(u).$$
\end{proposition}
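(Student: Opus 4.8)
The plan is to recast both sides as iterated \(\inf\)--\(\sup\) expressions and to read the identity as an exchange between \(\inf_{s>0}\) and \(\sup_{k\in\N}\). The case \(u=0\) is trivial (both sides vanish by the convention \(D^+f_k(\hat x)(0)=0\)), so assume \(u\neq 0\). For \(t>0\) and \(k\in\N\cup\{\infty\}\) write \(g_k(t):=\tfrac{f_k(\hat x+tu)-f_k(\hat x)}{t}\), and for \(0<s<r/\|u\|\) set \(H_k(s):=\sup_{0<t<s} g_k(t)\). Since \(f_k\) is Lipschitz on \(B(\hat x,r)\cap\Omega\), each \(H_k\) is non-decreasing and bounded below on \((0,r/\|u\|)\); and because \(\limsup_{t\to 0^+}=\inf_{s>0}\sup_{0<t<s}\) and \(H\) is non-decreasing, one has \(\inf_{s}H_k(s)=D^+f_k(\hat x)(u)\) and, using \(\sup_{0<t<s}\sup_k g_k(t)=\sup_k H_k(s)\), the left-hand side of the claim equals \(\inf_{s}\sup_{k}H_k(s)\). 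Thus the statement to prove is \(\inf_{s}\sup_{k}H_k(s)=\sup_{k}\inf_{s}H_k(s)\).

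The inequality \(\geq\) is the minimax inequality: for each fixed \(k_0\), \(\sup_k H_k\ge H_{k_0}\), hence \(\inf_s\sup_k H_k(s)\ge \inf_s H_{k_0}(s)=D^+f_{k_0}(\hat x)(u)\), and taking the supremum over \(k_0\) gives \(\inf_s\sup_k H_k(s)\ge \sup_k\inf_s H_k(s)\).

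For the reverse inequality, fix \(\epsilon>0\) and let \(N=N^\epsilon\) be the integer furnished by property \((H)\), so that the right-hand estimate in the defining inequality gives \(g_k(t)\le g_\infty(t)+\epsilon\|u\|\) for all \(k\ge N\) and all \(t\in(0,r/\|u\|)\). Taking the supremum over \(0<t<s\) yields \(H_k(s)\le H_\infty(s)+\epsilon\|u\|\) for every \(k\ge N\), whence on \((0,r/\|u\|)\),
\[
\sup_k H_k(s)\le \max\bigl(\textstyle\max_{0\le k\le N-1}H_k(s),\, H_\infty(s)+\epsilon\|u\|\bigr).
\]
The finitely many functions \(H_0,\dots,H_{N-1}\) and \(H_\infty+\epsilon\|u\|\) are non-decreasing and bounded below, so Lemma \ref{lem1} (whose proof applies verbatim on the interval \((0,r/\|u\|)\)) lets us interchange \(\inf_s\) with this finite maximum, giving
\[
\inf_s\sup_k H_k(s)\le \max\bigl(\textstyle\max_{0\le k\le N-1}D^+f_k(\hat x)(u),\, D^+f_\infty(\hat x)(u)+\epsilon\|u\|\bigr),
\]
where we used \(\inf_s\bigl(H_\infty(s)+\epsilon\|u\|\bigr)=D^+f_\infty(\hat x)(u)+\epsilon\|u\|\).

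It remains to bound the right-hand side by \(\sup_k D^+f_k(\hat x)(u)\). Each term \(\max_{0\le k\le N-1}D^+f_k(\hat x)(u)\) is \(\le \sup_k D^+f_k(\hat x)(u)\), and by Proposition \ref{prop1} we have \(D^+f_\infty(\hat x)(u)=\lim_k D^+f_k(\hat x)(u)\le \sup_k D^+f_k(\hat x)(u)\). Hence \(\inf_s\sup_k H_k(s)\le \sup_k D^+f_k(\hat x)(u)+\epsilon\|u\|\), and letting \(\epsilon\to 0\) gives the reverse inequality; combined with the first step this proves the identity. The main obstacle is exactly that \(\inf_s\) and \(\sup_k\) need not commute for a countable family, so the naive finite interchange fails; property \((H)\) is precisely what allows the tail of the family to be absorbed into an error term \(\epsilon\|u\|\), reducing the problem to the finite exchange supplied by Lemma \ref{lem1}.
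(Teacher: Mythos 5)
Your proof is correct and takes essentially the same route as the paper's: both split the countable supremum at the index $N^{\epsilon}$ furnished by property $(H)$, use Lemma \ref{lem1} to exchange $\inf_{s>0}$ with the resulting finite maximum of non-decreasing bounded-below functions, control the tail via the $\epsilon\|u\|$-estimate and Proposition \ref{prop1}, and let $\epsilon\to 0$. The only (immaterial) difference is that you absorb the tail into $H_{\infty}+\epsilon\|u\|$ before invoking Lemma \ref{lem1}, whereas the paper keeps the tail as an extra $(N^{\epsilon}+2)$-nd function and bounds it afterwards.
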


\begin{proof} Let $\epsilon>0$ and choose $N^{\epsilon}$ as in the formula (\ref{eq1}). 
Since 
$$\sup_{k\in \mathbb{N}}\frac{f_k(\hat{x}+t u)-f_k(\hat{x})}{t} = \max \{ \max_{0\leq i\leq  N^{\epsilon}} \frac{f_i(\hat{x}+t u)-f_i(\hat{x})}{t} , \sup_{k \geq N^{\epsilon}} \frac{f_k(\hat{x}+t u)-f_k(\hat{x})}{t} \}$$
Then, we have
\begin{eqnarray*}
\limsup_{t\to 0^+} \sup_{k\in \mathbb{N}}\frac{f_k(\hat{x}+t u)-f_k(\hat{x})}{t} &=&  \inf_{s>0} \sup_{0<t\leq s} \sup_{k\in \mathbb{N}}\frac{f_k(\hat{x}+t u)-f_k(\hat{x})}{t}\\
&=&  \inf_{s>0} \max \{ \max_{0\leq i \leq N^{\epsilon}}\sup_{0<t\leq s} \frac{f_i(\hat{x}+t u)-f_i(\hat{x})}{t}, \\
&& \hspace{15mm} \sup_{0<t\leq s} \sup_{k \geq N^{\epsilon}} \frac{f_k(\hat{x}+t u)-f_k(\hat{x})}{t} \}\\
&:=&  \inf_{s>0} \max_{0\leq i\leq N^{\epsilon}+1} h_i(s),
\end{eqnarray*}
where, for $0\leq i\leq N^{\epsilon}$, $$\forall s\in (0,+\infty), h_i(s) := \sup_{0<t\leq s} \frac{f_i(\hat{x}+t u)-f_i(\hat{x})}{t}\geq -\|f_i\|_{\hat{x},r}\|u\|,$$ and 
$$\forall s\in (0,+\infty), h_{N^{\epsilon}+1}(s) := \sup_{0<t\leq s} \sup_{k \geq N^{\epsilon}} \frac{f_k(\hat{x}+t u)-f_k(\hat{x})}{t}\geq -\inf_{k \geq N^{\epsilon}}\|f_k\|_{\hat{x},r}\|u\|.$$
Since the function $h_i: (0,+\infty)\to \R$ is bounded from below and non-decreasing with respect to $s$ ($0\leq i\leq N^{\epsilon}+1$), by lemma \ref{lem1} and by the definition of limsup,  we get
\begin{eqnarray}\label{eq6}
\limsup_{t\to 0^+} \sup_{k\in \mathbb{N}}\frac{f_k(\hat{x}+t u)-f_k(\hat{x})}{t} &=&  \inf_{s>0} \max_{0\leq i\leq N^{\epsilon}+1} h_i(s) \nonumber\\
&=& \max_{0\leq i \leq N^{\epsilon}+1} \inf_{s>0} h_i(s) \nonumber\\
&=& \max \{\max_{0\leq i\leq N^{\epsilon}} D^+f_i(\hat{x})(u) , \nonumber\\
&& \hspace{5mm} \limsup_{t\to 0^+} \sup_{k \geq N^{\epsilon}} \frac{f_k(\hat{x}+t u)-f_k(\hat{x})}{t} \}
\end{eqnarray}

 Using the formulas in (\ref{eq1}) we see that 
\begin{eqnarray*}
\limsup_{t\to 0^+} \sup_{k \geq N^{\epsilon}} \frac{f_k(\hat{x}+t u)-f_k(\hat{x})}{t} \leq D^+f_\infty (\hat{x})(u) + \epsilon \| u \|.
\end{eqnarray*}
Hence, by the formula (\ref{eq6}), we obtain
\begin{eqnarray*}  \limsup_{t\to 0^+} \sup_{k\in \mathbb{N}}\frac{f_k(\hat{x}+t u)-f_k(\hat{x})}{t} \leq \max \{ \max_{0\leq i\leq N^{\epsilon}} D^+f_i (\hat{x})(u) , D^+f_\infty (\hat{x})(u) + \epsilon \| u \| \}.
\end{eqnarray*}
Using Proposition \ref{prop1}, we have that $D^+f_\infty (\hat{x})(u)\leq \sup_{k\in \N} D^+f_k(\hat{x})(u)$ and so we get 
\begin{eqnarray*}  \limsup_{t\to 0^+} \sup_{k\in \mathbb{N}}\frac{f_k(\hat{x}+t u)-f_k(\hat{x})}{t} &\leq& \max \{ \sup_{_{\substack{k \in \N}}} D^+f_k (\hat{x})(u) , D^+f_\infty (\hat{x})(u) + \epsilon \| u \| \}\\
& \leq & \sup_{_{\substack{k \in \N}}} D^+f_k (\hat{x})(u) + \epsilon \| u \|.
\end{eqnarray*}
By passing to the limit when $\epsilon \to 0$,  we obtain
\begin{eqnarray*}    \limsup_{t\to 0^+} \sup_{k\in \mathbb{N}}\frac{f_k(\hat{x}+t u)-f_k(\hat{x})}{t} \leq \sup_{_{\substack{k \in \N}}} D^+f_k (\hat{x})(u).
\end{eqnarray*}
The inverse inequality is trivial.
\end{proof}

Recall that $c(\N)$ denotes the space of all converging sequences of real numbers equipped with the sup-norm.  Recall also that the topological dual of $c(\N)$ is isometrically isomorphic to the space $\ell^1(\N)$ (see \cite{DS}). The isomorphism of $\ell ^{1}(\N)$ with $c^*(\N)$ is given as follows. If $(\alpha_{\infty},\alpha_0,\alpha_1,...)\in \ell ^{1}(\N)$ then the pairing with an element $(y_0,y_1,...)\in c(\N)$ is given by 
$$ \displaystyle \alpha_{\infty}\lim _{n\to \infty }y_{n}+\sum _{i=0}^{\infty }\alpha_{i}y_{i}.$$

We denote $ \ell^1_+(\N)$ and $c_+(\N)$ the closed convex positive cone of $\ell^1(\N)$ and $c(\N)$ respectively. If $A$ is a nonempty subset of $c(\N)$, we denote by $\textnormal{cone}(A)$ the convex conique hull of $A$ and by $\overline{A}$ we denote the closure of $A$.  We need the following lemma  in the proof of Theorem \ref{thm4}.

\begin{lemma} \label{lem2} Let $X$ be a real vector space and $C$ be a nonempty convex subset of $X$. Let $h_n :C\to \R$, $n\in \N$, be a sequence of convex functions pointwize converging to some (convex) function $h_{\infty}$. Then, there exists a sequence $(\alpha_{\infty},\alpha_0,\alpha_1,...) \in \ell^1_+(\N)$ such that $\alpha_{\infty}+\sum_{n\geq 0} \alpha_n=1$ and 
$$\inf_{u\in C} \left(\alpha_{\infty} h_{\infty}(u)+ \sum_{n\geq 0} \alpha_{n} h_n(u) \right)  = \inf_{u\in C} \sup_{n\in \N} h_n(u).$$ 
\end{lemma}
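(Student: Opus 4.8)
The plan is to realise the desired convex combination as the value of a positive functional in $\ell^1(\N)=c(\N)^*$ produced by a Hahn--Banach separation. Set $\phi(u):=\sup_{n\in\N}h_n(u)$ and $m:=\inf_{u\in C}\phi(u)$. Since the sequence $(h_n(u))_n$ converges (to $h_\infty(u)$) it is bounded, so $\phi(u)<+\infty$ for every $u$ and hence $m\in[-\infty,+\infty)$. If $m=-\infty$, then $h_0(u)\le\sup_n h_n(u)$ shows $h_0$ is unbounded below on $C$, and the choice $\alpha_0=1$ with all other weights $0$ already gives $\inf_u h_0(u)=-\infty=m$; so I may assume $m$ finite. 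Subtracting the constant $m$ from every $h_n$ and from $h_\infty$ shifts both sides of the claimed identity by $-m$ (the weights summing to $1$), so I may and will assume $m=0$, i.e. $\phi\ge0$ on $C$ with $\inf_C\phi=0$.

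Next I would embed the problem into $c(\N)$. Define $\Phi\colon C\to c(\N)$ by $\Phi(u)=(h_0(u),h_1(u),\dots)$, which lies in $c(\N)$ precisely because $h_n(u)\to h_\infty(u)$. For $\alpha=(\alpha_\infty,\alpha_0,\alpha_1,\dots)\in\ell^1(\N)$ the duality pairing recalled in the excerpt reads $\langle\alpha,\Phi(u)\rangle=\alpha_\infty h_\infty(u)+\sum_{n\ge0}\alpha_n h_n(u)$, which is exactly the combination to be controlled. Consider the convex set $A:=\mathrm{conv}\{\Phi(u):u\in C\}$ and the open convex cone $U:=\{y\in c(\N):\sup_n y_n<0\}$ (open in the sup-norm, nonempty since $(-1,-1,\dots)\in U$). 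The key step is that $A\cap U=\varnothing$: if $\sum_j\lambda_j\Phi(u_j)\in U$ with $\lambda_j\ge0$, $\sum_j\lambda_j=1$, then setting $\bar u=\sum_j\lambda_j u_j\in C$, convexity of each $h_n$ gives $h_n(\bar u)\le\sum_j\lambda_j h_n(u_j)$, whence $\phi(\bar u)\le\sup_n\sum_j\lambda_j h_n(u_j)<0$, contradicting $m=0$.

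I would then apply the geometric Hahn--Banach theorem to the disjoint convex sets $A$ and $U$ (one of them open), obtaining a nonzero $\ell=(\alpha_\infty,\alpha_0,\alpha_1,\dots)\in\ell^1(\N)$ and $c\in\R$ with $\ell(y)\le c\le\ell(a)$ for all $y\in U$, $a\in A$. Positivity of $\alpha$ is the technical crux. Since $U$ is a cone on which $\ell$ is bounded above, $\ell\le0$ on $U$, hence on $\overline U\supseteq\{y:y_n\le0\ \forall n\}$ (approximate such $y$ by $y-\varepsilon(1,1,\dots)\in U$ and let $\varepsilon\to0^+$); consequently $\ell\ge0$ on the positive cone $c_+(\N)$. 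Testing on $e_i$ gives $\alpha_i\ge0$, and testing on the tails $(0,\dots,0,1,1,\dots)\in c_+(\N)$ and letting the number of leading zeros tend to infinity gives $\alpha_\infty\ge0$; thus $\alpha\in\ell^1_+(\N)$, and since $\ell\ne0$ I may rescale $\ell$ and $c$ so that $\|\alpha\|_1=\alpha_\infty+\sum_{n\ge0}\alpha_n=1$.

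Finally I would pin down the constant. Because $0\in\overline U$ and $\ell$ is continuous, passing to the limit in $\ell(y)\le c$ along $U\ni y\to0$ yields $c\ge0$. For every $u\in C$, since $\Phi(u)\in A$, $\alpha_\infty h_\infty(u)+\sum_n\alpha_n h_n(u)=\ell(\Phi(u))\ge c\ge0$, so $\inf_{u\in C}\bigl(\alpha_\infty h_\infty(u)+\sum_n\alpha_n h_n(u)\bigr)\ge0$. The reverse inequality is automatic for any admissible weights: as $h_\infty(u)\le\phi(u)$ and $h_n(u)\le\phi(u)$ with the weights nonnegative and summing to $1$, the combination is $\le\phi(u)$, so its infimum is $\le\inf_C\phi=0$. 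Hence both sides coincide at $0=m$, which after undoing the normalisation is the asserted identity. I expect the delicate points to be the nonnegativity of the limit-coordinate $\alpha_\infty$ and the verification $c\ge0$; the geometric idea — separating $\mathrm{conv}\,\Phi(C)$ from the open cone $\{\sup_n y_n<0\}$ inside $c(\N)$ — is what makes the $\ell^1$--$c(\N)$ pairing reproduce exactly the required convex combination.
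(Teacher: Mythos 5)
Your proof is correct and follows essentially the same route as the paper: embed the problem into $c(\N)$ via $u\mapsto (h_n(u))_n$, use convexity of the $h_n$ to show that a suitable convex set cannot meet the negative cone, apply Hahn--Banach separation, identify the functional with an element of $\ell^1(\N)$, and extract nonnegativity of all coordinates (including $\alpha_\infty$, via the tail sequences) from its sign on $c_+(\N)$. The only difference is cosmetic: you separate $\mathrm{conv}\,\Phi(C)$ from the open cone $\{y:\sup_n y_n<0\}$, whereas the paper separates the point $-1_{\N}$ from $\overline{\mathrm{cone}(A)+c_+(\N)}$ after normalising by $r$; both normalisations and both separation setups are equivalent.
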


\begin{proof} Set $r:= \inf_{u\in C} \sup_{n\in \N} h_n(u)$. Since, $h_n$ pointwize converges, we see that $r<+\infty$. We always  have 
$$\inf_{u\in C} \left(\alpha_{\infty} h_{\infty}(u)+ \sum_{n\geq 0} \alpha_{n} h_n(u) \right) \leq r.$$ 
We prove the inverse inequality. If $r=-\infty$, then the formulas is trivially true with any sequence  $(\alpha_{\infty},\alpha_0,\alpha_1,...) \in \ell^1_+(\N)$ such that $\alpha_{\infty}+\sum_{n\geq 0} \alpha_n=1$. Suppose that $r>-\infty$. Set $A:=\lbrace \left (h_n(u) - r\right)_{n\in \N}\in \R^{\N}: u\in C\rbrace\subset c(\N)$ (since $(h_n)$ pointwize converges to $h_{\infty}$). Let us show that the sequence $-1_\N:=(-1,-1,...,-1,...) \not \in \overline{\textnormal{cone}(A)+c_+(\N)}$. Suppose that the contrary holds. Then, there exists $(a_n)\in \textnormal{cone}(A)$ and $(x_n)\in c_+(\N)$ such that
$$\|(a_n)+(x_n)+1_\N \|_{\infty}<\frac{1}{2}.$$
By the definition of $\textnormal{cone}(A)$, there exists some $k\geq 1$, $\lambda_1,...,\lambda_k \geq 0$ and $u_1,...,u_k\in C$ such that $a_n= \sum_{i=1}^k \lambda_i \left (h_n(u_i)-r\right)$ for all $n\in \N$. From the above inequality we get that 
$$ \sum_{i=1}^k \lambda_i \left (h_n(u_i)-r\right) +x_n +1<\frac{1}{2}, \hspace{1mm} \forall n\in \N.$$
Since $x_n\geq 0$, then there exists some $i\in \lbrace 1,...,k\rbrace$ such that $\lambda_i\neq 0$. Set $\gamma_i:=\frac{\lambda_i}{\sum_{j=1}^k \lambda_j}$. Using the convexity of $h_n$, we get
$$h_n(\sum_{i=1}^k \gamma_i u_i)< \frac{1}{\sum_{j=1}^k \lambda_j}(-\frac{1}{2}-x_n) +r<-\frac{1}{2\sum_{j=1}^k \lambda_j} +r, \hspace{1mm} \forall n\in \N.$$
We deduce, using the definition of $r$ that $r\leq -\frac{1}{2\sum_{j=1}^k \lambda_j} +r$ which is a contradiction. Hence, $-1_\N \not \in \overline{\textnormal{cone}(A)+c_+(\N)}$ and so by the Hanh-Banach separation theorem and the fact that $\overline{\textnormal{cone}(A)+c_+(\N)}$ is a cone, there exists $(\alpha_{\infty},\alpha_0, \alpha_1,...)\in l^1(\N)\setminus \lbrace 0\rbrace$ such that 
\begin{eqnarray} \label{INE}
\alpha_{\infty}\lim_{n\to +\infty} y_n + \sum_{n\geq 0} \alpha_{n} y_n\geq 0, \hspace{1mm} \forall (y_n) \in \overline{\textnormal{cone}(A)+c_+(\N)}.
\end{eqnarray}

\noindent {\bf Claim.} We have  $\alpha_{\infty}\geq 0$ and $\alpha_n\geq 0$ for all $n\geq 0$. 

\begin{proof}[Proof of the claim] Since $e_k:=(\delta_n^k)\in c_+(\N)\subset \overline{\textnormal{cone}(A)+c_+(\N)}$ (where $\delta_n^k$ is the Kronecker symbol satisfying $\delta_n^k=1$ if $n=k$ and $0$ if $n\neq k$), we see from the above inequality that $\alpha_n\geq 0$ for all $n\geq 1$. On the other hand, for all $K\in \N$, define $y_n=0$ if $n\leq K$ and $y_n=1$ if $n\geq K$. Then, $(y_n)\in c_+(\N)\subset \overline{\textnormal{cone}(A)+c_+(\N)}$ and by the above inequality we have that $\alpha_{\infty} + \sum_{n\geq K} \alpha_{n}\geq 0$ for all $K\in \N$, which implies that $\alpha_{\infty}\geq 0$ by taking the limit when $K\to +\infty$, since the series $\sum_{n\geq 0} \alpha_{n}$ is a convergent. This completes the proof of the claim.
\end{proof}

Now, using $(\ref{INE})$ by taking sequences $(y_n)\in A\subset \overline{\textnormal{cone}(A)+c_+(\N)}$, we obtain 
$$\alpha_{\infty}(h_{\infty}(u)-r)+\sum_{n\geq 0} \alpha_{n} \left (h_n(u)-r\right)\geq 0, \hspace{1mm} \forall u\in C.$$
Dividing by $\alpha_{\infty}+\sum_{n\geq 0} \alpha_{n} >0$ if necessary, we can assume without loss of generality that $\alpha_{\infty}+\sum_{n\geq 0} \alpha_{n}=1$ and we get using the above inequality that, 
$$\inf_{u\in C} \left(\alpha_{\infty} h_{\infty}(u)+\sum_{n\geq 0} \alpha_{n} h_n(u) \right) \geq r:= \inf_{u\in C} \sup_{n\in \N} h_n(u).$$ 
\end{proof}

\section{The main results} \label{SS3}
This section is devoted to the main results of the article. Several new types of alternative theorems will be given and applications to convex and non-convex optimization problems will be derived.  We begin with Theorem \ref{thm4}. We will then give some applications to the alternative theorem for Lipschitz invex functions and  to the problem of optimisation with a countable number of inequality constraints (Corollary \ref{cor3}, Corollary \ref{opt} and Corollary \ref{pourc}).

\subsection{Alternative theorem for sequences of functions}
We give the proof of our main result. 

\begin{theorem} \label{thm4}  Let us assume that a sequence $(h_n)$ satisfies the property $(H)$ at $\hat{x}$. Then, either 
$$\{ (x,t)\in \Omega\times [0,1]: \sup_{n\in \N} \left(h_n(x) - (1-t)h_n(\hat{x})\right) <0 \}\neq \emptyset,$$
or there exits a sequence $(\alpha_{\infty},\alpha_0,\alpha_1,...) $ such that $\alpha_{\infty}\geq 0$ and $\alpha_n\geq 0$ for all $n\in \N$ and satisfying:  

$(i)$ $\alpha_{\infty}+\sum_{n\geq 0} \alpha_n=1$,

$(ii)$ $\alpha_{\infty} h_{\infty}(\hat{x})+\sum_{n\geq 0} \alpha_{n}h_n(\hat{x})\geq 0$,

$(ii)$ $ \alpha_{\infty} D^+h_{\infty}(\hat{x})(u)+\sum_{n\geq 0} \alpha_{n} D^+h_n(\hat{x})(u) \geq 0, \forall u\in E.$

$\bullet$ If moreover, we assume that there exists $w\in E$ such that $ \sup_{n\geq 1} D^+ h_n(\hat{x})(w) < 0$, then  $ \alpha_{0}\neq 0$. 

$\bullet \bullet$  If we assume that $h_n$ is Gateaux-differentiable at $\hat{x}$ for all $n\in \N$, then $ \alpha_{\infty} d_G h_{\infty}(\hat{x})(u) + \sum_{n\geq 0} \alpha_{n} d_G h_n(\hat{x})(u)= 0$, for all $x\in E$.

\end{theorem}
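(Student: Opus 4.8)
The plan is to reduce all three conclusions $(i)$, $(ii)$, $(iii)$ to a \emph{single} application of Lemma \ref{lem2} by packaging the value $h_n(\hat{x})$ and the directional derivative $D^+h_n(\hat{x})$ into one family of jointly sublinear functions on an augmented cone. Concretely, I would work on the real vector space $E\times\R$ and set, for $(u,s)\in E\times\R_+$,
$$\phi_n(u,s):=D^+h_n(\hat{x})(u)+s\,h_n(\hat{x}),\qquad \phi_\infty(u,s):=D^+h_\infty(\hat{x})(u)+s\,h_\infty(\hat{x}).$$
Each $\phi_n$ is sublinear (hence convex) on the convex cone $C:=E\times\R_+$ by $(H)(i)$, and $\phi_n\to\phi_\infty$ pointwise by Proposition \ref{prop1} together with $(H)(ii)$, with $\phi_\infty$ again sublinear. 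Lemma \ref{lem2} then supplies $(\alpha_\infty,\alpha_0,\alpha_1,\dots)\in\ell^1_+(\N)$ with $\alpha_\infty+\sum_{n\geq0}\alpha_n=1$ (this is exactly $(i)$ together with the nonnegativity) and such that, writing $\Phi:=\alpha_\infty\phi_\infty+\sum_{n\geq0}\alpha_n\phi_n$,
$$\inf_{C}\Phi=\inf_{C}\sup_{n}\phi_n=:m.$$
Since both $\Phi$ and $\sup_n\phi_n$ are positively homogeneous and vanish at the origin, one has $m\in\{0,-\infty\}$, and the whole proof splits along this dichotomy.

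If $m=0$, then $\Phi\geq0$ on all of $C$. Evaluating $\Phi(u,0)\geq0$ gives the directional-derivative inequality $(iii)$, while evaluating $\Phi(0,1)\geq0$ gives the value inequality $(ii)$ (using the convention $D^+h_n(\hat{x})(0)=0$). Thus the branch $m=0$ already produces the multipliers of the second alternative.

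It then remains to show that $m=-\infty$ forces the first set to be nonempty, equivalently that an empty first set forces $m=0$. This is the main obstacle: if $m=-\infty$ then by homogeneity $\sup_n\phi_n(u,s)<0$ for some $(u,s)\in E\times\R_+$, and one must convert this infinitesimal, uniform-in-$n$ inequality on derivatives into a genuine point $(x,t)\in\Omega\times[0,1]$ with $\sup_n\big(h_n(x)-(1-t)h_n(\hat{x})\big)<0$, i.e. pass from $\sup_nD^+h_n(\hat{x})$ back to difference quotients uniformly over the entire sequence. I would handle this with Proposition \ref{prop2} applied not to $(h_n)$ but to the augmented sequence $\bar h_n(x,\xi):=h_n(x)+\xi\,h_n(\hat{x})$ on $\Omega\times\R$: one checks that $(\bar h_n)$ again satisfies $(H)$ at $(\hat{x},0)$ (the seminorm of $\bar h_n-\bar h_\infty$ is controlled by $\|h_n-h_\infty\|_{\hat{x},r}+|h_n(\hat{x})-h_\infty(\hat{x})|\to0$, and $D^+\bar h_n(\hat{x},0)(u,s)=\phi_n(u,s)$), so Proposition \ref{prop2} in the direction $(u,s)$ yields
$$\limsup_{\tau\to0^+}\sup_n\Big(\frac{h_n(\hat{x}+\tau u)-h_n(\hat{x})}{\tau}+s\,h_n(\hat{x})\Big)=\sup_n\phi_n(u,s)<0.$$
Hence for small $\tau>0$ this supremum is negative; multiplying by $\tau>0$ and setting $x=\hat{x}+\tau u\in\Omega$, $t=\tau s\in[0,1]$ gives $\sup_n\big(h_n(x)-(1-t)h_n(\hat{x})\big)<0$, so the first set is nonempty.

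Finally, the two refinements sit inside the multiplier branch. For $\bullet$, suppose $\alpha_0=0$; then $(i)$ gives $\alpha_\infty+\sum_{n\geq1}\alpha_n=1$, and evaluating $(iii)$ at $u=w$, where each $D^+h_n(\hat{x})(w)$ for $n\geq1$ and $D^+h_\infty(\hat{x})(w)=\lim_kD^+h_k(\hat{x})(w)$ are all bounded above by $\sup_{n\geq1}D^+h_n(\hat{x})(w)<0$, forces the left-hand side to be strictly negative, contradicting $(iii)$; hence $\alpha_0\neq0$. For $\bullet\bullet$, when every $h_n$ is Gateaux-differentiable Proposition \ref{prop1} makes $h_\infty$ Gateaux-differentiable as well with each $D^+h_n(\hat{x})=d_Gh_n(\hat{x})$ linear, so the functional appearing in $(iii)$ is linear and nonnegative on all of $E$; applying it to both $u$ and $-u$ yields equality, which is precisely the assertion of $\bullet\bullet$.
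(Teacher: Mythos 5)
Your proof is correct and rests on the same key ingredients as the paper's: Lemma \ref{lem2} applied to the sublinear functions $D^{+}h_n(\hat{x})(\cdot)+(\cdot)\,h_n(\hat{x})$, and Proposition \ref{prop2} (via an auxiliary sequence) to pass between $\sup_n$ of difference quotients and $\sup_n$ of Dini derivatives. The differences are only organizational --- the paper argues by contraposition (empty first set $\Rightarrow$ $\inf_{u}\sup_{n}\left(D^{+}h_n(\hat{x})(u)+h_n(\hat{x})\right)\geq 0$ $\Rightarrow$ multipliers via Lemma \ref{lem2}), while you run the dichotomy $m\in\{0,-\infty\}$ forward and carry the constant term with an extra cone variable $s$ rather than fixing $s=1$ and invoking positive homogeneity; both variants are sound, and your treatment of the two bullet refinements matches the paper's.
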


\begin{proof}  Suppose that $\lbrace (x,t)\in \Omega \times [0,1]: \sup_{n\in \N} \left(h_n(x) - (1-t)h_n(\hat{x})\right) <0 \rbrace=\emptyset$ then we have $\sup_{n\in \N}\lbrace h_n (x) -(1-t)h_n(\hat{x}) \rbrace\geq 0$ for all $x\in \Omega$ and for all $t\in [0,1]$. Thus, for all  $u \in E$ there exists $\delta_u \in(0,1]$ such that for all $t \in (0,\delta_u]$, we have $\hat{x}\pm tu\in \Omega$ and 
\[
 \sup_{n \in \N } \left( h_n (\hat{x} \pm tu) - (1-t)h_n (\hat{x})\right) \geq 0.
\]
It follows in particular that,
\[
\limsup_{t\to 0^+}  \sup_{n \in \N }  \frac{h_n (\hat{x} + tu) - (1-t)h_n (\hat{x})}{t} \geq 0, \forall u\in E.
\]
For each $u\in E$ and $n\in \N$, define $\phi^u_n(t):=h_n(\hat{x}+tu)+th_n(\hat{x})$ for all $t\in (-\delta_u,\delta_u)$. We see that the sequence $(\phi^u_n)$ satisfies the property $(H)$ in $\R$ at the point $0$, since $(h_n)$ satisfies $(H)$ in $E$ at $\hat{x}$. By applying Proposition \ref{prop2} to the sequence $(\phi^u_n)$, we obtain,
$$\sup_{n \in \N} D^+\phi^u_n(0)=\limsup_{t\to 0^+} \sup_{n \in \N} \frac{\phi^u_n(t) -\phi^u_n(0)}{t}.$$
In other words, for each $u\in E$,
$$\sup_{n \in \N} \left (D^+h_n (\hat{x})(u)+h_n(\hat{x})\right )=\limsup_{t\to 0^+} \sup_{n \in \N} \frac{h_n(\hat{x}+t u)-(1-t)h_n(\hat{x})}{t}.$$
Therefore, we have 
\begin{eqnarray}\label{Peq3} 
\inf_{u\in E} \sup_{n \in \N} \left (D^+h_n (\hat{x})(u)+h_n(\hat{x})\right )\geq  0. 
\end{eqnarray}
Define $g_n:= D^+h_n(\hat{x})(\cdot)+h_n(\hat{x})$ for all $n\in \N$. Then, $g_n$ is convex for all $n\in \N$ and pointwize converges to $g_{\infty}:=D^+h_{\infty}(\hat{x})(\cdot)+h_{\infty}(\hat{x})$ by Proposition \ref{prop1}.  We apply Lemma ~\ref{lem2} with the sequence $(g_n)$, there exists a sequence $(\alpha_{\infty},\alpha_0,\alpha_1,...)\in \ell^1_+(\N)$ such that $\alpha_{\infty}+\sum_{n\geq 0} \alpha_n =1$ and
\begin{eqnarray*}
\inf_{u\in E} \left (\alpha_{\infty} g_{\infty}(u)+\sum_{n\geq 0} \alpha_{n} g_n(u) \right)&=& \inf_{u\in E} \sup_{n\in \N} g_n(u)\geq 0 \hspace{1mm} (\textnormal{by using } (\ref{Peq3})).
\end{eqnarray*}
Thus, we have for all $u\in E$,
$$ \left(\alpha_{\infty} D^+h_{\infty}(\hat{x})(u)+\sum_{n\geq 0} \alpha_{n} D^+h_n(\hat{x})(u) \right) +\left(\alpha_{\infty} h_{\infty}(\hat{x})+\sum_{n\geq 0} \alpha_{n}h_n(\hat{x})\right)\geq 0.$$
From this inequality we can derive the following two pieces of information: 

-- Firstly, by taking $u=0$, we get 
$$\alpha_{\infty} h_{\infty}(\hat{x})+\sum_{n\geq 0} \alpha_{n}h_n(\hat{x})\geq 0.$$

-- Secondely,  since the map $\alpha_0 D^+h_{\infty}(\hat{x})(\cdot)+\sum_{n\geq 0} \alpha_{n} D^+h_n(\hat{x})(\cdot)$ is positively homogeneous, we see that for all $u\in E$ we have
$$ \alpha_{\infty} D^+h_{\infty}(\hat{x})(u)+\sum_{n\geq 0} \alpha_{n} D^+h_n(\hat{x})(u)\geq 0.$$
This ends the first part of the proof.

$\bullet$ If moreover, we assume that there exists $w\in E$ such that $\sup_{n\geq 1} D^+ h_n(\hat{x})(w) < 0$, then $\alpha_{0}\neq 0$. Indeed, suppose by contradiction that $\alpha_{0}= 0$, then there exists some integer number $N\in \N \setminus \{0\}$ such that $\max(\alpha_{\infty}, \alpha_N)>0$ (since, $\alpha_{\infty}+\sum_{n\geq 0} \alpha_n =1$). Since $\alpha_{\infty}\geq 0$, $\alpha_n\geq 0$ for all $n\in \N$ and $D^+ h_{\infty}(\hat{x})(w)\leq \sup_{n\geq 1} D^+ h_n(\hat{x})(w) < 0$, it follows that $$0>  \alpha_{\infty} D^+ h_{\infty} (\hat{x})(w)+ \alpha_N D^+ h_N (\hat{x})(w)+\sum_{n\in \N\setminus \{0, N\}} \alpha_{n} D^+ h_n(\hat{x})(w) \geq 0,$$ which is a contradiction.

$\bullet \bullet$  If we assume that $h_n$ is Gateaux-differentiable at $\hat{x}$ for all $n\in \N$, then $\alpha_{\infty} d_G h_{\infty}(\hat{x})(u)+\sum_{n\geq 0} \alpha_{n} d_G h_n(\hat{x})(u)=0$. Indeed, we use the fact that $D^+ h_n(\hat{x})=d_G h_n (\hat{x})$ for all $n\in \N$ and $D^+ h_{\infty}(\hat{x})=d_G h_{\infty} (\hat{x})$ (see Proposition \ref{prop1}), we get that $ \alpha_{\infty} d_G h_{\infty}(\hat{x})(u) + \sum_{n\geq 0} \alpha_{n} d_G h_n(\hat{x})(u)\geq 0$ for all $u\in E$. Since the Gateaux-differentials are linear, we obtain $ \alpha_{\infty} d_G h_{\infty}(\hat{x})(u) + \sum_{n\geq 0} \alpha_{n} d_G h_n(\hat{x})(u)= 0$.
\end{proof}

The following result extend the alternative theorem given by Fan, Glicksberg, and Hoffman \cite{FGH} to a sequence $(h_n)$ of convex functions (without continuity) pointwize converging to some function and defined on a nonempty convex subset of some real vector space (not necessarily a normed space).

\begin{theorem} \label{thm}  Let $X$ be a real vector space and $C$ be a nonempty convex subset of $X$. Let $h_n :C\to \R$, $n\in \N$, be a sequence of convex functions pointwize converging to some function $h_{\infty}$. Then,  one and only one of the following alternatives is true: 

$A1)$ there exists $ u_0 \in C $ such that $\sup_{n\in \N} h_n(u_0) <0, $

$A2)$ there exits a sequence of real numbers $(\alpha_{\infty}, \alpha_{0},\alpha_{1},...) $  such that $\alpha_{\infty}\geq 0$, $\alpha_{n}\geq 0$ for all $n\in \N$,  $\alpha_{\infty}+\sum_{n\geq 0} \alpha_{n}=1$ and
 $\alpha_{\infty} h_{\infty}(u)+\sum_{n\geq 0} \alpha_{n} h_n (u)\geq 0,$ for all $u\in E$.

\end{theorem}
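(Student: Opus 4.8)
The plan is to reduce the theorem directly to Lemma \ref{lem2}, which already carries the full Hahn--Banach separation work; the remaining task is purely to repackage that statement as a dichotomy and to verify that the two alternatives are mutually exclusive.

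First I would prove that $A1)$ and $A2)$ cannot hold simultaneously. Suppose $A1)$ holds, so there is $u_0 \in C$ with $\sup_{n \in \N} h_n(u_0) =: -c < 0$ for some $c > 0$. Passing to the limit, $h_\infty(u_0) = \lim_n h_n(u_0) \leq -c < 0$ as well, so each of $h_n(u_0)$ and $h_\infty(u_0)$ is bounded above by $-c$. Then for any admissible multipliers as in $A2)$,
$$\alpha_\infty h_\infty(u_0) + \sum_{n \geq 0} \alpha_n h_n(u_0) \leq -c\Big(\alpha_\infty + \sum_{n \geq 0}\alpha_n\Big) = -c < 0,$$
contradicting the inequality in $A2)$. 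Hence at most one alternative can be true.

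Next I would show that at least one holds. If $A1)$ is false, then $\sup_{n \in \N} h_n(u) \geq 0$ for every $u \in C$, i.e. $\inf_{u \in C}\sup_{n \in \N} h_n(u) \geq 0$. Since $(h_n)$ is a sequence of convex functions on the convex set $C$ converging pointwise to $h_\infty$ (which is then automatically convex as a pointwise limit of convex functions), Lemma \ref{lem2} applies and yields $(\alpha_\infty, \alpha_0, \alpha_1, \ldots) \in \ell^1_+(\N)$ with $\alpha_\infty + \sum_{n \geq 0}\alpha_n = 1$ and
$$\inf_{u \in C}\Big(\alpha_\infty h_\infty(u) + \sum_{n \geq 0}\alpha_n h_n(u)\Big) = \inf_{u \in C}\sup_{n \in \N} h_n(u) \geq 0.$$
The non-negativity of this infimum is exactly the inequality $\alpha_\infty h_\infty(u) + \sum_{n\geq 0}\alpha_n h_n(u) \geq 0$ for all $u \in C$, which is $A2)$. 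Combined with the exclusivity established above, precisely one of $A1), A2)$ is true.

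Since all the analytic difficulty --- the separation of $-1_\N$ from $\overline{\textnormal{cone}(A) + c_+(\N)}$ and the extraction of the multipliers --- is already absorbed into Lemma \ref{lem2}, I expect no genuine obstacle here. The only point deserving a word of care is that the supremum defining $A1)$ runs over the original indices $n \in \N$ only, whereas $h_\infty$ appears in the multiplier combination of $A2)$; this is harmless because $h_\infty(u) = \lim_n h_n(u) \leq \sup_{n \in \N} h_n(u)$, so adjoining $h_\infty$ to the supremum would not alter its value, and the bound used in the exclusivity step applies equally to $h_\infty(u_0)$.
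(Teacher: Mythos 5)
Your proof is correct and follows essentially the same route as the paper: both reduce the statement to Lemma \ref{lem2} and then split on the sign of $\inf_{u\in C}\sup_{n\in\N}h_n(u)$. Your explicit verification of mutual exclusivity (using $h_\infty(u_0)=\lim_n h_n(u_0)\leq -c$ and $\alpha_\infty+\sum_n\alpha_n=1$) is a small addition the paper leaves implicit, but it is the same argument in substance.
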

\begin{proof}  From Lemma \ref{lem2}, there exists a sequence $(\alpha_{\infty},\alpha_0,\alpha_1,...) \in \ell^1_+(\N)$ such that $\alpha_{\infty}+\sum_{n\geq 0} \alpha_n=1$ and 
$$\inf_{u\in C} \left(\alpha_{\infty} h_{\infty}(u)+ \sum_{n\geq 0} \alpha_{n} h_n(u) \right)  = \inf_{u\in C} \sup_{n\in \N} h_n(u).$$ 
Then, either $\inf_{u\in C} \sup_{n\in \N} h_n(u)<0$ or $\inf_{u\in C} \sup_{n\in \N} h_n(u)\geq 0$, which gives the alternatives $A1)$ and $A2)$ respectively.
\end{proof}

\begin{remark} Notice that the case of a finite number of functions $h_0,h_1, ..., h_N$ can be deduced from our results by considering the sequence $(\widetilde{h}_n)$ with $\widetilde{h}_n=h_n$ for all $0\leq n\leq N$ and $\widetilde{h}_n=h_N$ for all $n\geq N$.
\end{remark}
\subsection{Alternative theorem of invex functions} \label{invex}
Real-valued invex functions were introduced by Hanson \cite{Ha} as a generalization of convex functions: A function $f : E \to \R$, Gateaux differentiable at $\hat{x}\in E$, is said to be invex at $\hat{x}$ if there exists a vector-valued function $\mu_{\hat{x}}: E \to E$ such that
$$ d_G f(\hat{x})(\mu_{\hat{x}}(x)) \leq f(x)-f(\hat{x}), \forall x\in E.$$
We say that $f$ is invex if it is invex at each point. We refer to \cite{YS, Re, BL} for other type of invexity and the relationship between them. In the following definition, we extend the invexity notion to a sequence of Lipschitz functions using the upper Dini derivative. 

\begin{definition} Let $E$ be a normed real vector space, $\Omega \subset E$ an open set and $\hat{x}\in \Omega$.  Let $f_n : \Omega \to \R$ be a  function which is Lipschitz on $B(\hat{x},r)\cap \Omega$ (for some $r>0$), for every $n\in \N$. We say that the sequence $(f_n)$ is invex at $\hat{x}$ if there exists a vector-valued function $\mu_{\hat{x}} : E \to E$ such that
$$ D^+ f_n(\hat{x})(\mu_{\hat{x}}(x)) \leq f_n(x)-f_n(\hat{x}), \forall x\in \Omega, \forall n\in \N.$$
\end{definition}

\begin{example} \label{ho} Let $E$ be a normed real vector space.

$1)$ Any sequence $(f_n)$ of convex Lipschitz functions on $E$ is invex at any point $\hat{x}\in E$, with $\mu_{\hat{x}}(x)=x-\hat{x}$ for every $x\in E$. This follows from the formulas in $(\ref{star0})$ and $(\ref{star})$.


$2)$ Let $(p_n)$ be a sequence of linear continuous functionals on $E$. For all $n\in \N$, let $f_n : E \to \R$ defined by $f_n(x)=\textnormal{actan}^2(p_n(x))+\|x\|$ for all $x\in \R$. Then, for each $n\in \N$, $f_n$ is a Lipschitz function, but neither convex nor  Gateaux-differentiable at $0$.  However, it is easy to see  that $D^+ f_n (0)(x)=d^+ f_n (0)(x)=\|x\|\leq f_n(x)=f_n(x)-f_n(0)$ for all $x\in E$ and all $n\in \N$. Thus, the sequence $(f_n)$ is invex at $0$, with $\mu_{0}(x)=x$ for every $x\in E$. 
\end{example}
We obtain the following alternative theorem for a sequence of Lipschitz invex functions.

\begin{corollary} \label{cor3}  Let us assume that a sequence $(h_n)$ satisfies the property $(H)$ at $\hat{x}$. Suppose moreover that $(h_n)$ is invex at $\hat{x}$. Then, one of the following alternatives is true:

$A1)$ $\{ (x,t)\in \Omega\times [0,1]: \sup_{n\in \N} \left(h_n(x) - (1-t)h_n(\hat{x})\right) <0 \}\neq \emptyset$,

$A2)$ there exits a sequence $(\alpha_{\infty},\alpha_0,\alpha_1,...) $ such that:

$(i)$ $\alpha_{\infty}\geq 0$, $\alpha_n\geq 0$ for all $n\in \N$ and $\alpha_{\infty}+\sum_{n\geq 0} \alpha_n=1$,

$(ii)$ $\alpha_{\infty} h_{\infty}(\hat{x}) + \sum_{n\geq 0} \alpha_{n} h_n(\hat{x}) \geq 0$,

$(iii)$ the function $ \alpha_{\infty} h_{\infty}+\sum_{n\geq 0} \alpha_{n} h_n$ has a global minimum on $\Omega$ at $\hat{x}$.

\end{corollary}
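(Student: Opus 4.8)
The plan is to reduce the corollary to Theorem~\ref{thm4} and then convert its first-order (directional) conclusion into the global-minimum statement $(iii)$ by means of the invexity hypothesis. First I would apply Theorem~\ref{thm4} to the sequence $(h_n)$. If its first alternative holds, we are in case $A1)$ and there is nothing left to prove. Otherwise Theorem~\ref{thm4} produces a sequence $(\alpha_{\infty},\alpha_0,\alpha_1,\dots)$ with $\alpha_{\infty}\geq 0$ and $\alpha_n\geq 0$ for all $n$, satisfying $(i)$ and $(ii)$ of the corollary (these are literally the conclusions $(i)$ and $(ii)$ of Theorem~\ref{thm4}) together with the directional inequality
\begin{eqnarray*}
\alpha_{\infty} D^+h_{\infty}(\hat{x})(u)+\sum_{n\geq 0} \alpha_{n} D^+h_n(\hat{x})(u) \geq 0, \quad \forall u\in E.
\end{eqnarray*}
All that remains is to upgrade this last inequality into the assertion that $F:=\alpha_{\infty}h_{\infty}+\sum_{n\geq 0}\alpha_n h_n$ attains a global minimum on $\Omega$ at $\hat{x}$.

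Before the main computation I would record that $h_{\infty}$ is itself invex at $\hat{x}$ with the \emph{same} map $\mu_{\hat{x}}$. Indeed, fixing $x\in \Omega$, the invexity of $(h_n)$ gives $D^+h_n(\hat{x})(\mu_{\hat{x}}(x))\leq h_n(x)-h_n(\hat{x})$ for every $n$. Letting $n\to +\infty$ and using Proposition~\ref{prop1} on the left-hand side (so that $D^+h_n(\hat{x})(\mu_{\hat{x}}(x))\to D^+h_{\infty}(\hat{x})(\mu_{\hat{x}}(x))$) together with the pointwise convergence $h_n\to h_{\infty}$ furnished by property $(H)$ on the right-hand side, I obtain
\begin{eqnarray*}
D^+h_{\infty}(\hat{x})(\mu_{\hat{x}}(x))\leq h_{\infty}(x)-h_{\infty}(\hat{x}), \quad \forall x\in \Omega.
\end{eqnarray*}

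Now I would fix $x\in \Omega$ and combine the invexity inequalities. The uniform bounds $\sup_n\|h_n\|_{\hat{x},+\infty}<+\infty$ and $\sup_n|h_n(\hat{x})|<+\infty$ coming from property $(H)$, together with the summability $\sum_n \alpha_n<+\infty$, ensure that the series defining $F(x)$ and $F(\hat{x})$ converge absolutely and that the term-by-term manipulation below is legitimate (the sequences $(D^+h_n(\hat{x})(\mu_{\hat{x}}(x)))_n$ are likewise bounded, by the common Lipschitz bound). Multiplying the invexity inequality for each $h_n$ by $\alpha_n\geq 0$, the one just established for $h_{\infty}$ by $\alpha_{\infty}\geq 0$, and summing, I get
\begin{eqnarray*}
F(x)-F(\hat{x}) &=& \alpha_{\infty}\bigl(h_{\infty}(x)-h_{\infty}(\hat{x})\bigr)+\sum_{n\geq 0}\alpha_n\bigl(h_n(x)-h_n(\hat{x})\bigr)\\
&\geq& \alpha_{\infty} D^+h_{\infty}(\hat{x})(\mu_{\hat{x}}(x))+\sum_{n\geq 0}\alpha_n D^+h_n(\hat{x})(\mu_{\hat{x}}(x)).
\end{eqnarray*}
Applying the directional inequality of Theorem~\ref{thm4} with the specific direction $u=\mu_{\hat{x}}(x)$ shows the right-hand side is nonnegative, so $F(x)\geq F(\hat{x})$. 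Since $x\in\Omega$ is arbitrary, $F$ has a global minimum on $\Omega$ at $\hat{x}$, which is $(iii)$.

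The essentially routine but only genuinely delicate points are the passage to the limit that transfers invexity to $h_{\infty}$ and the absolute convergence of the series defining $F$; both are controlled entirely by the uniform Lipschitz and pointwise bounds packaged into property $(H)$ and by the summability of $(\alpha_n)$, so no extra hypothesis is required, and the bulk of the argument is simply summing the invexity inequalities against the nonnegative multipliers and invoking the direction $\mu_{\hat{x}}(x)$ in the conclusion of Theorem~\ref{thm4}.
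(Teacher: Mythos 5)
Your proposal is correct and follows essentially the same route as the paper: apply Theorem~\ref{thm4}, transfer invexity to $h_{\infty}$ via Proposition~\ref{prop1} and the pointwise convergence from property $(H)$, then sum the invexity inequalities against the nonnegative multipliers and evaluate the directional inequality at $u=\mu_{\hat{x}}(x)$. Your explicit justification of the limit passage and of the absolute convergence of the series is a welcome amount of extra care, but it does not change the argument.
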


\begin{proof} Clearly, alternatives $A1)$ and $A2)$ cannot be true at the same time. Suppose that $A1)$ is not true, then from Theorem \ref{thm4} there exits a sequence $(\alpha_{\infty},\alpha_0,\alpha_1,...) $ such that :  

$(a)$ $\alpha_{\infty}\geq 0$, $\alpha_n\geq 0$ for all $n\in \N$ and $\alpha_{\infty}+\sum_{n\geq 0} \alpha_n=1$, 

$(b)$ $\alpha_{\infty} h_{\infty}(\hat{x})+\sum_{n\geq 0} \alpha_{n}h_n(\hat{x})\geq 0$,

$(c)$ $ \alpha_{\infty} D^+h_{\infty}(\hat{x})(u)+\sum_{n\geq 0} \alpha_{n} D^+h_n(\hat{x})(u) \geq 0, \forall u\in E.$ 

Using the fact that $(h_n)$ is invex at $\hat{x}\in E$ together with Proposition \ref{prop1}, we see that 
$$ D^+ h_n(\hat{x})(\mu_{\hat{x}}(x)) \leq h_n(x)-h_n(\hat{x}), \forall x\in \Omega, \forall n\in \N,$$
and
$$ D^+ h_{\infty}(\hat{x})(\mu_{\hat{x}}(x)) \leq h_{\infty}(x)-f_{\infty}(\hat{x}), \forall x\in \Omega.$$
Thus, from $(c)$ we get that for all $x\in \Omega$,
\begin{eqnarray*}
0 &\leq& \alpha_{\infty} D^+ h_{\infty}(\hat{x})(\mu_{\hat{x}}(x)) +\sum_{n\geq 0} \alpha_n D^+ h_n(\hat{x})(\mu_{\hat{x}}(x)) \\
&\leq& \alpha_{\infty} \left (h_{\infty}(x) - h_{\infty}(\hat{x}) \right)+ \sum_{n\geq 0} \alpha_{n} \left (h_n(x) - h_n(\hat{x}) \right).
\end{eqnarray*}
In other words, the function $ \alpha_{\infty} h_{\infty}+\sum_{n\geq 0} \alpha_{n} h_n$ has a global minimum on $\Omega$ at $\hat{x}$.
\end{proof}

\subsection{Application to optimization problem with countable number of inequality constraints} \label{SS4}

This section is a continuation of recent works on optimisation problems with inequality constraints obtained in \cite{BL, Bl, BB}. We consider the first-order necessary conditions for  optimality problems with inequality constraints in a normed vector space in the form of John's theorem and in the form of Karush-Kuhn-Tucker's theorem. Let us start by considering the following finite-dimensional problem with a finite number of constraints:
\begin{equation*}
(\mathcal{P}_N)
\left \{
\begin{array}
[c]{l}
\min f_0\\
x\in \Omega\\
\forall 1\leq n\leq N, f_{n}(x) \leq 0. 
\end{array}
\right. 
\end{equation*}
where $\Omega$ is a nonempty open subset of $\R^p$. 

It is well-knowm \cite{Po}  (see also \cite[Chapter 3, Scetion 3.2]{ATF} and \cite{Hh}) that the classical necessary conditions of optimality says that if  $\hat{x}\in \Omega$ is an optimal solution of $(\mathcal{P}_N)$ and that $f_0,f_1,...,f_N$ are Fr\'echet-differentiable at $\hat{x}$ ($d_F f_i(\hat{x})$ denotes the Fr\'echet-differential of $f_i$ at $\hat{x}$), then there exists  $\lambda_0\geq 0, \lambda_1\geq 0,...\lambda_N\geq 0$ such that

$(i)$ $(\lambda_0,\lambda_1,...,\lambda_N)\neq (0,0,...,0)$ (a non-trivial Lagrange multipliers),

$(ii)$ $\lambda_i f_i(\hat{x})=0$ for all $1\leq i\leq N$,

$(iii)$ $\sum_{i=0}^N \lambda_i d_F f_i(\hat{x})=0$ (``condition of Fritz John''). 

If moreover, we assume that there exists $w\in \R^p$ such that $d_F f_i(\hat{x})(w)<0$ for all $1\leq i\leq N$, then we can assume that $\lambda_0\neq 0$ (``condition of Karush-Kuhn-Tucker'').

\vskip5mm
 In \cite{Bl}, J. Blot extended the above conditions from Fr\'echet differentiability to Gateaux differentiablitity (see also \cite{Yi} in infinite dimentional). There is another way to generalize the assumption of Fr\'echet diﬀerentiability by using locally Lipschitz mappings and the Clarke's subdifferential or approximate subdifferential (see \cite{Cf, JT1,JT2, Jourani}). However, it is known that in general the Clarke's subdifferential of a Gateaux differentiable function does not always coincide with the Gateaux derivative of the function. Recall that unlike the Clarke's subdifferential, the upper Dini differentiability coincides with the Gateaux differentiability for Gateaux-differentiable functions. Recently the authors extended in \cite{BL} the result of Blot \cite{Bl} from Gateaux differentiability to the more general  upper Dini differentiability. 

Following on from this work and using our result in Theorem \ref{thm4}, the aim is to find necessary optimality conditions involving Lagrange multipliers for optimisation problems with inequality constraints in a normed vector space. We consider a countable number of inequality constraints for Lipschitz functions around the optimal solution that are not necessarily Gateaux differentiable, thus extending the recent results in \cite{BL, Bl}.  

\paragraph{\bf The non-convex framework in a normed real vector space} Let $E$ be a real normed vector space, $\Omega$ be a nonempty open subset of $E$, let $(f_n)$, be a sequence of real-valued functions on $\Omega$. Consider the following problem:
\begin{equation*}
(\mathcal{P}_\infty)
\left \{
\begin{array}
[c]{l}
\min f_0\\
x\in \Omega\\
\forall n\geq 1, f_{n}(x) \leq 0.
\end{array}
\right. 
\end{equation*}

\begin{corollary} \label{opt}   Let us assume that the assumption $(H)$ is satisfied for the sequence $(f_n)$ at $\hat{x}$.  Suppose that $\hat{x}$ is an optimal solution of the problem $(\mathcal{P}_\infty)$. Then, there exits a sequence $(\alpha_{\infty},\alpha_0,\alpha_1,...) $ such that:

$(i)$ $\alpha_{\infty}\geq 0$, $\alpha_n\geq 0$ for all $n\in \N$ and $\alpha_{\infty}+\sum_{n\geq 0} \alpha_n=1$,

$(ii)$ $\alpha_{\infty} f_{\infty}(\hat{x})=0$ and $ \alpha_{n}f_n(\hat{x})= 0$ for all $n\geq 1$,

$(iii)$ $ \alpha_{\infty} D^+f_{\infty}(\hat{x})(u)+\sum_{n\geq 0} \alpha_{n} D^+f_n(\hat{x})(u) \geq 0, \forall u\in E.$

$\bullet$ If moreover, we assume that there exists $w\in E$ such that $ \sup_{n\geq 1} D^+ f_n(\hat{x})(w) < 0$, then $ \alpha_{0}\neq 0$. 

$\bullet \bullet$  If we assume that $h_n$ is Gateaux-differentiable at $\hat{x}$ for all $n\in \N$, then $ \alpha_{\infty} d_G f_{\infty}(\hat{x})(u) + \sum_{n\geq 0} \alpha_{n} d_G f_n(\hat{x})(u)= 0$.

\end{corollary}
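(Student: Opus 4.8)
The plan is to deduce the statement directly from Theorem \ref{thm4}, applied not to $(f_n)$ itself but to a normalized sequence that encodes the optimality of $\hat{x}$. First I would introduce the auxiliary sequence $(h_n)$ defined by $h_0 := f_0 - f_0(\hat{x})$ and $h_n := f_n$ for every $n \geq 1$. Since this modifies $(f_n)$ only in its zeroth term, and only by an additive constant, the sequence $(h_n)$ again satisfies property $(H)$ at $\hat{x}$: the constant shift changes neither the seminorm $\|\cdot\|_{\hat{x},r}$ nor the tail, so the limit function is unchanged, $h_\infty = f_\infty$, and one has $h_0(\hat{x}) = 0$, $D^+ h_0(\hat{x}) = D^+ f_0(\hat{x})$, while $h_n(\hat{x}) = f_n(\hat{x})$ and $D^+ h_n(\hat{x}) = D^+ f_n(\hat{x})$ for $n \geq 1$ and for the limit.

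The crucial step is to rule out the first alternative of Theorem \ref{thm4}, that is, to show that
\[
\{ (x,t)\in \Omega\times [0,1]: \sup_{n\in \N} \left(h_n(x) - (1-t)h_n(\hat{x})\right) <0 \} = \emptyset.
\]
Suppose it contained a point $(x,t)$. Reading the strict inequality at $n=0$ gives $f_0(x) - f_0(\hat{x}) < 0$, because $h_0(\hat{x}) = 0$. For $n \geq 1$, feasibility of $\hat{x}$ yields $f_n(\hat{x}) \leq 0$, hence $(1-t)f_n(\hat{x}) \leq 0$ since $1 - t \geq 0$, so the inequality forces $f_n(x) < (1-t)f_n(\hat{x}) \leq 0$. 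Thus $x \in \Omega$ is feasible for $(\mathcal{P}_\infty)$ and strictly improves the objective, contradicting the optimality of $\hat{x}$. Therefore the second alternative of Theorem \ref{thm4} holds and produces multipliers $(\alpha_{\infty},\alpha_0,\alpha_1,\dots)$.

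It then remains to transcribe the three conclusions of Theorem \ref{thm4} in terms of the $f_n$. Item $(i)$ is identical. Item $(iii)$ follows verbatim, since the Dini differentials of $h_n$ and $h_\infty$ coincide with those of $f_n$ and $f_\infty$. For the complementarity conditions $(ii)$, item $(ii)$ of the theorem reads $\alpha_{\infty} f_{\infty}(\hat{x}) + \sum_{n\geq 1} \alpha_{n} f_n(\hat{x}) \geq 0$, the $n=0$ term dropping out because $h_0(\hat{x}) = 0$. On the other hand every summand is nonpositive, since $f_n(\hat{x}) \leq 0$, $f_{\infty}(\hat{x}) = \lim_n f_n(\hat{x}) \leq 0$, and $\alpha_{\infty}, \alpha_n \geq 0$; a sandwich argument then forces each summand to vanish, giving $\alpha_{\infty} f_{\infty}(\hat{x}) = 0$ and $\alpha_n f_n(\hat{x}) = 0$ for all $n \geq 1$. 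The two supplementary assertions transfer directly: the Karush--Kuhn--Tucker bullet is exactly the corresponding bullet of Theorem \ref{thm4}, its hypothesis involving only the $h_n = f_n$ with $n \geq 1$; and the Gateaux-differentiable case follows from the double bullet once one observes that the constant shift leaves every Gateaux differential unchanged.

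The only genuinely delicate point is the reduction through the first alternative: one must verify that the normalization $h_0 = f_0 - f_0(\hat{x})$, together with feasibility $f_n(\hat{x}) \leq 0$, makes the single strict inequality $\sup_{n\in\N}\left(h_n(x) - (1-t)h_n(\hat{x})\right) < 0$ simultaneously encode strict feasibility of the constraints and a strict decrease of the objective. Everything after that is routine bookkeeping on the multipliers.
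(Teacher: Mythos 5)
Your proposal is correct and follows essentially the same route as the paper: the same normalization $h_0 := f_0 - f_0(\hat{x})$, $h_n := f_n$ for $n\geq 1$, the reduction to the second alternative of Theorem \ref{thm4}, and the sandwich argument turning the inequality of part $(ii)$ into the complementarity conditions. You spell out the verification that the first alternative set is empty (which the paper only asserts), but the argument is the same.
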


\begin{proof} Define the sequence $(h_n)$ as follows: $h_0:=f_0-f_0(\hat{x})$ and $h_n:=f_n$ for all $n\geq 1$. Clearly, the sequence $(h_n)$ also satisfies the property $(H)$ and $\hat{x}$ is also an optimal solution of the problem $(\widetilde{\mathcal{P}}_\infty)$:

\begin{equation*}
(\widetilde{\mathcal{P}}_\infty)
\left \{
\begin{array}
[c]{l}
\min h_0\\
x\in \Omega\\
\forall n\geq 1, h_{n}(x) \leq 0.
\end{array}
\right. 
\end{equation*}

It follows that $h_n(\hat{x})\leq 0$ for all $n\in \N$ ($h_0(\hat{x})=0$) and since $\hat{x}$ is a solution of $(\widetilde{\mathcal{P}}_\infty)$, we see that $$\lbrace (x,t)\in \Omega \times [0,1]: \sup_{n\in \N} \left(h_n(x) - (1-t)h_n(\hat{x})\right) <0\rbrace=\emptyset.$$
We conclude using Theorem \ref{thm4} applied with the sequence $(h_n)$, observing that the part $(ii)$ of Theorem \ref{thm4} together with the fact that $ h_n(\hat{x})\leq 0$ for all $n\geq 0$, implies that $\alpha_{\infty} h_{\infty}(\hat{x})=0$ and $ \alpha_{n}h_n(\hat{x})= 0$ for all $n\geq 0$. Equivalently, (since $h_0(\hat{x})=0$) $\alpha_{\infty} f_{\infty}(\hat{x})=0$ and $ \alpha_{n} f_n(\hat{x})= 0$ for all $n\geq 1$.

\end{proof}

\paragraph{\bf The convex framework in a real vector space} We give the following extension of Pourciau result in \cite[Theorem 6, p. 445]{Po}. Here, the finite number of convex functions in finite dimension is extended to a countable number of convex functions in real  vector space.

\begin{corollary}  \label{pourc0} Let $X$ be a real vector space and $C$ be a nonempty convex subset of $X$. Let $g_n :C\to \R$, $n\in \N$, be a sequence of convex functions pointwize converging to some function $g_{\infty}$. Suppose that $\hat{x}\in C$ is an optimal solution of the following problem

\begin{equation*}
(\mathcal{P}_\infty)
\left \{
\begin{array}
[c]{l}
\min g_0\\
x\in C\\
\forall n\geq 1, g_{n}(x) \leq 0 \\
\end{array}
\right. 
\end{equation*}

Then,  there exits a sequence $(\alpha_{\infty},\alpha_0,\alpha_1,...) $ such that:

$(i)$ $\alpha_{\infty}\geq 0$, $\alpha_n\geq 0$ for all $n\in \N$ and $\alpha_{\infty}+\sum_{n\geq 0} \alpha_n=1$,

$(ii)$ $\alpha_{\infty} g_{\infty}(\hat{x})=0$ and $ \alpha_{n}g_n(\hat{x})= 0$ for all $n\geq 1$,

$(iii)$ $ \alpha_{\infty} g_{\infty}+\sum_{n\geq 0} \alpha_{n} g_n$, has a global minimum on $C$ at $\hat{x}$.

$\bullet$ If moreover, we assume that there exists $w\in C$ such that $ \sup_{n\geq 1} (f_n(w) -f_n(\hat{x}))< 0$, then $ \alpha_{0}\neq 0$. In this case, the conditions $(i)$, $(ii)$ and $(iii)$ are  necessary and sufficient for $\hat{x}$ to be a solution of $(\mathcal{P}_\infty)$.

\end{corollary}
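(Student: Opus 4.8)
The plan is to reduce the statement to the convex alternative theorem, Theorem \ref{thm}, in exact analogy with the way Corollary \ref{opt} was deduced from Theorem \ref{thm4}. First I would normalise the objective by setting $h_0 := g_0 - g_0(\hat{x})$ and $h_n := g_n$ for every $n \geq 1$; since only the zeroth term is modified, the sequence $(h_n)$ is still convex and still converges pointwise to $h_\infty = g_\infty$. The optimality of $\hat{x}$ then translates into the failure of alternative $A1$ of Theorem \ref{thm}: if there were $u_0 \in C$ with $\sup_{n\in\N} h_n(u_0) < 0$, then $h_n(u_0) = g_n(u_0) < 0$ for $n\geq 1$ would make $u_0$ feasible while $h_0(u_0) < 0$ would give $g_0(u_0) < g_0(\hat{x})$, contradicting that $\hat{x}$ minimises $g_0$ over the feasible set. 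Hence $A2$ must hold, producing $(\alpha_\infty, \alpha_0, \alpha_1, \dots)$ with $\alpha_\infty, \alpha_n \geq 0$ and $\alpha_\infty + \sum_{n\geq 0}\alpha_n = 1$ (this is $(i)$), together with $\alpha_\infty h_\infty(u) + \sum_{n\geq 0}\alpha_n h_n(u) \geq 0$ for all $u \in C$.

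To obtain the complementary slackness condition $(ii)$ I would evaluate the last inequality at $u = \hat{x}$. Feasibility gives $g_n(\hat{x}) \leq 0$ for $n \geq 1$, and passing to the limit $g_\infty(\hat{x}) = \lim_n g_n(\hat{x}) \leq 0$, while $h_0(\hat{x}) = 0$ by construction. Thus every summand $\alpha_\infty h_\infty(\hat{x})$ and $\alpha_n h_n(\hat{x})$ is $\leq 0$, and a sum of nonpositive terms being $\geq 0$ forces each to vanish; rewriting in terms of $g_n$ yields $\alpha_\infty g_\infty(\hat{x}) = 0$ and $\alpha_n g_n(\hat{x}) = 0$ for $n \geq 1$. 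For $(iii)$ I would substitute $h_0 = g_0 - g_0(\hat{x})$ back into the $A2$ inequality to get $\alpha_\infty g_\infty(u) + \sum_{n\geq 0}\alpha_n g_n(u) \geq \alpha_0 g_0(\hat{x})$ for all $u \in C$, and observe, using $(ii)$, that the value of $\alpha_\infty g_\infty + \sum_{n\geq 0}\alpha_n g_n$ at $\hat{x}$ is exactly $\alpha_0 g_0(\hat{x})$; hence $\hat{x}$ is a global minimiser of this function on $C$.

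For the Karush--Kuhn--Tucker refinement, assume the Slater-type condition $\sup_{n\geq 1}(g_n(w) - g_n(\hat{x})) =: -c < 0$ for some $w \in C$ (the statement's $f_n$ should read $g_n$). Arguing by contradiction, if $\alpha_0 = 0$ then $(i)$ gives $\alpha_\infty + \sum_{n\geq 1}\alpha_n = 1$, and $(iii)$ together with $(ii)$ yields $\alpha_\infty g_\infty(u) + \sum_{n\geq 1}\alpha_n g_n(u) \geq 0$ on $C$; evaluating at $u = w$ and subtracting the vanishing complementary-slackness terms gives $\alpha_\infty(g_\infty(w) - g_\infty(\hat{x})) + \sum_{n\geq 1}\alpha_n(g_n(w) - g_n(\hat{x})) \geq 0$, whereas each difference is $\leq -c$ (including the limit $g_\infty(w) - g_\infty(\hat{x}) \leq -c$), so the left side is $\leq -c(\alpha_\infty + \sum_{n\geq 1}\alpha_n) = -c < 0$, a contradiction; thus $\alpha_0 \neq 0$. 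Finally, for sufficiency, given any feasible $u$ one inserts $g_n(u) \leq 0$ (and $g_\infty(u) \leq 0$) into $(iii)$ rewritten as $\alpha_0 g_0(u) + \alpha_\infty g_\infty(u) + \sum_{n\geq 1}\alpha_n g_n(u) \geq \alpha_0 g_0(\hat{x})$, so that $\alpha_0 g_0(u) \geq \alpha_0 g_0(\hat{x})$ and, dividing by $\alpha_0 > 0$, $\hat{x}$ is optimal. I expect the main delicacy to be the bookkeeping of the $g_\infty$ and $\alpha_\infty$ terms, in particular justifying $g_\infty(\hat{x}) \leq 0$ and the uniform bound $g_\infty(w) - g_\infty(\hat{x}) \leq -c$ by passing to the pointwise limit, rather than any genuinely hard step, since the convex separation work is already packaged inside Theorem \ref{thm} and Lemma \ref{lem2}.
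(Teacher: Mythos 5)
Your proof is correct and follows essentially the same route as the paper: the same normalisation $h_0=g_0-g_0(\hat{x})$, the same $\ell^1$-separation machinery (you invoke Theorem~\ref{thm}, which is itself Lemma~\ref{lem2}), and the same derivation of complementary slackness by evaluating the multiplier inequality at $u=\hat{x}$ and using that a sum of nonpositive terms cannot be positive. The only difference is that the paper applies Lemma~\ref{lem2} to the shifted functions $f_n-(1-t_0)f_n(\hat{x})$ for a fixed $t_0\in(0,1)$ and then divides by $t_0$, whereas you observe directly that alternative $A1$ of Theorem~\ref{thm} fails (effectively taking $t_0=1$), which is a harmless simplification; you also correctly flag the typo $f_n$ versus $g_n$ in the Slater condition.
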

\begin{proof} Define the sequence $(f_n)$ as follows: $f_0:=g_0-g_0(\hat{x})$ and $f_n:=g_n$ for all $n\geq 1$. Clearly,  $\hat{x}$ is also an optimal solution of the problem $(\widetilde{\mathcal{P}}_\infty)$:

\begin{equation*}
(\widetilde{\mathcal{P}}_\infty)
\left \{
\begin{array}
[c]{l}
\min f_0\\
x\in C\\
\forall n\geq 1, f_{n}(x) \leq 0.
\end{array}
\right. 
\end{equation*}

It follows that $f_n(\hat{x})\leq 0$ for all $n\in \N$ ($f_0(\hat{x})=0$) and since $\hat{x}$ is a solution of $(\widetilde{\mathcal{P}}_\infty)$, we see that $$\lbrace (x,t)\in C \times [0,1]: \sup_{n\in \N} \left(f_n(x) - (1-t)f_n(\hat{x})\right) <0\rbrace=\emptyset.$$
Let us fixe $t_0\in (0,1)$, then 
$$\inf_{u\in E} \sup_{n\in \N} \left(f_n(u) - (1-t_0)f_n(\hat{x})\right) \geq 0.$$
By applying Lemma \ref{lem2} to the sequence $h_n:=f_n- (1-t_0)f_n(\hat{x})$, there exists a sequence $(\alpha_{\infty},\alpha_0,\alpha_1,...) \in \ell^1_+(\N)$ such that $\alpha_{\infty}+\sum_{n\geq 0} \alpha_n=1$ and 
$$\inf_{u\in C} \left(\alpha_{\infty} h_{\infty}(u)+ \sum_{n\geq 0} \alpha_{n} h_n(u) \right)  = \inf_{u\in C} \sup_{n\in \N} h_n(u).$$ 
In other words, 
\begin{eqnarray*}
\alpha_{\infty} (f_{\infty}(u)-f_{\infty}(\hat{x}))+ \sum_{n\geq 0} \alpha_{n} (f_n(u)-f_n(\hat{x})) +t_0\left(\alpha_{\infty} f_{\infty}(\hat{x})+ \sum_{n\geq 0} \alpha_{n} f_n(\hat{x}) \right)\geq 0.
\end{eqnarray*}
By applying the above inequality with $u=\hat{x}$, we get that $$\alpha_{\infty} f_{\infty}(\hat{x})+ \sum_{n\geq 0} \alpha_{n} f_n(\hat{x}) \geq 0.$$ Since $f_n(\hat{x})\leq 0$ and $\alpha_n\geq 0$ for all $n\in \N$, we deduce that $\alpha_{\infty} f_{\infty}(\hat{x})=0$ and $\alpha_{n} f_n(\hat{x})= 0$ for all $n\geq 0$. Then, we deduce that $\alpha_{\infty} f_{\infty}+ \sum_{n\geq 0} \alpha_{n} f_n$ has a global minimum on $C$ at $\hat{x}$. Since,  $f_0:=g_0-g_0(\hat{x})$ and $f_n:=g_n$ for all $n\geq 1$, we give the first part of the corollary.  Suppose now that there exists $w\in E$ such that $ \sup_{n\geq 1} (f_n(w) -f_n(\hat{x}))< 0$. Then, using $(iii)$ we easily see that $\alpha_0\neq 0$. In this case, we see easily that $(i)$, $(ii)$ and $(iii)$ implies that $\hat{x}$ is a solution of $(\mathcal{P}_\infty)$.
\end{proof}

For Lipschitz convex functions in a normed vector space, we obtain the following corollary.

\begin{corollary}  \label{pourc} Let us assume that $(f_n)$ is a sequence of real-valued Lipschitz convex functions on a convex open subset $\Omega$ of $E$ satisfying the property $(H)$ at $\hat{x}\in \Omega$. Assume that there exists $w\in E$ such that $$ \sup_{n\geq 1} d^+ f_n(\hat{x})(w) < 0.$$ Then, the following assertions are equivalente. 

$(a)$ $\hat{x}$ is a solution of the problem:

\begin{equation*}
(\mathcal{P}_\infty)
\left \{
\begin{array}
[c]{l}
\min f_0\\
x\in \Omega\\
\forall n\geq 1, f_{n}(x) \leq 0.
\end{array}
\right. 
\end{equation*}

$(b)$  there exits a sequence $(\beta_{\infty},\beta_0,\beta_1,...) $ such that:

$(i)$ $\beta_{\infty}\geq 0$, $\beta_n\geq 0$ for all $n\in \N$, 

$(ii)$ $\beta_{\infty} f_{\infty}(\hat{x})=0$ and $\beta_{n} f_n(\hat{x})= 0$ for all $n\geq 1$,

$(iii)$ $ d^+ f_0(\hat{x}) (u)+\beta_\infty d^+f_{\infty}(\hat{x})(u)+\sum_{n\geq 1} \beta_{n} d^+f_n(\hat{x})(u)\geq 0$.

$(c)$  there exits a sequence $(\beta_{\infty},\beta_0,\beta_1,...) $ such that:

$(i)$ $\beta_{\infty}\geq 0$, $\beta_n\geq 0$ for all $n\in \N$,

$(ii)$ $\beta_{\infty} f_{\infty}(\hat{x})=0$ and $\beta_{n} f_n(\hat{x})= 0$ for all $n\geq 1$,

$(iii)$ the function $ f_0+\beta_\infty f_{\infty}+\sum_{n\geq 1} \beta_{n} f_n$ has a global minimum on $\Omega$ at $\hat{x}$.

\end{corollary}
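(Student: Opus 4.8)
The plan is to prove the cycle of implications $(a)\Rightarrow(b)\Rightarrow(c)\Rightarrow(a)$, exploiting throughout that for convex functions which are Lipschitz near $\hat{x}$ the upper Dini derivative and the right directional derivative coincide, $D^+f_n(\hat{x})=d^+f_n(\hat{x})$, by formula $(\ref{star})$ (and likewise for the convex limit $f_\infty$).

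First I would prove $(a)\Rightarrow(b)$. Assuming $\hat{x}$ solves $(\mathcal{P}_\infty)$, I apply Corollary \ref{opt} to $(f_n)$ to get a sequence $(\alpha_{\infty},\alpha_0,\alpha_1,\dots)$ with $\alpha_{\infty},\alpha_n\geq 0$, $\alpha_{\infty}+\sum_{n\geq 0}\alpha_n=1$, the complementarity relations of $(ii)$, and the stationarity inequality $\alpha_{\infty}D^+f_{\infty}(\hat{x})(u)+\sum_{n\geq 0}\alpha_n D^+f_n(\hat{x})(u)\geq 0$. The Slater hypothesis $\sup_{n\geq 1}d^+f_n(\hat{x})(w)<0$ is exactly the condition in the first bullet of Corollary \ref{opt} (since $D^+=d^+$ here), so $\alpha_0\neq 0$. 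I then normalize by setting $\beta_{\infty}=\alpha_{\infty}/\alpha_0$ and $\beta_n=\alpha_n/\alpha_0$ for $n\geq 1$; dividing the stationarity inequality by $\alpha_0>0$ and replacing $D^+$ by $d^+$ yields $(b)(iii)$ with coefficient $1$ on $f_0$, while $(b)(i)$ and $(b)(ii)$ are immediate.

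Next, $(b)\Rightarrow(c)$ is the purely convex step. Here I use the subgradient inequality $(\ref{star0})$, $d^+f(\hat{x})(y-\hat{x})\leq f(y)-f(\hat{x})$, valid on the convex set $\Omega$ for each $f_n$, for $f_0$, and for $f_\infty$ (convex as a pointwise limit of convex functions). Substituting $u=y-\hat{x}$ into $(b)(iii)$ and bounding each term from above gives $f_0(y)+\beta_{\infty}f_{\infty}(y)+\sum_{n\geq 1}\beta_n f_n(y)\geq f_0(\hat{x})+\beta_{\infty}f_{\infty}(\hat{x})+\sum_{n\geq 1}\beta_n f_n(\hat{x})$, which is precisely $(c)(iii)$; conditions $(i)$ and $(ii)$ are unchanged. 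The termwise bound is legitimate because all $\beta_n\geq 0$ and, by property $(H)$ (uniform bound on the global Lipschitz constants together with boundedness of $(f_n(\hat{x}))_n$), the series $\sum_{n\geq 1}\beta_n f_n$ converges absolutely at every point of $\Omega$, so the Lagrangian in $(c)(iii)$ is well defined.

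Finally, $(c)\Rightarrow(a)$ is the standard convex comparison. Writing $F:=f_0+\beta_{\infty}f_{\infty}+\sum_{n\geq 1}\beta_n f_n$, complementarity $(ii)$ gives $F(\hat{x})=f_0(\hat{x})$. For any feasible $y$ (i.e.\ $f_n(y)\leq 0$ for $n\geq 1$) one has $f_{\infty}(y)=\lim_n f_n(y)\leq 0$, so since $\beta_{\infty},\beta_n\geq 0$ the correction terms are nonpositive and $F(y)\leq f_0(y)$; combining with $(c)(iii)$ yields $f_0(\hat{x})=F(\hat{x})\leq F(y)\leq f_0(y)$, so $\hat{x}$ minimizes $f_0$ over the feasible set. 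The step requiring care is that this comparison presumes $\hat{x}$ is feasible: feasibility is part of $\hat{x}$ being ``a solution'' and is the standing hypothesis on the candidate point, and I would make this explicit. I also note that the Slater condition is used only in $(a)\Rightarrow(b)$, precisely to force $\alpha_0\neq 0$ and thereby put the multipliers into the normalized (Karush--Kuhn--Tucker) form with unit coefficient on $f_0$ appearing in $(b)$ and $(c)$; without it one could only assert a Fritz--John-type condition with a possibly vanishing objective multiplier.
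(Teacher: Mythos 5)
Your proposal is correct and follows essentially the same route as the paper: $(a)\Rightarrow(b)$ via Corollary \ref{opt} plus the Slater condition to force $\alpha_0\neq 0$ and then normalization, $(b)\Rightarrow(c)$ via the subgradient inequality $(\ref{star0})$ (which is exactly the invexity argument of Example \ref{ho} and Corollary \ref{cor3} that the paper invokes), and $(c)\Rightarrow(a)$ by the standard comparison that the paper declares clear. Your explicit remarks on the summability of $\sum_{n\geq 1}\beta_n f_n$ and on the feasibility of $\hat{x}$ in the last step are welcome details but do not change the argument.
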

\begin{proof} $(a) \Longrightarrow (b)$. Suppose that $(a)$ hold. We apply directely Corollary \ref{opt} using the formulat in $(\ref{star})$ (since $f_n$ is convex for all $n\in \N$), there exits a sequence $(\alpha_{\infty},\alpha_0,\alpha_1,...) $ such that:

$(i)$ $\alpha_{\infty}\geq 0$, $\alpha_n\geq 0$ for all $n\in \N$ and $\alpha_{\infty}+\sum_{n\geq 0} \alpha_n=1$,

$(ii)$ $\alpha_{\infty} f_{\infty}(\hat{x})=0$ and $ \alpha_{n}f_n(\hat{x})= 0$ for all $n\geq 1$,

$(iii)$ $ \alpha_{\infty} d^+f_{\infty}(\hat{x})(u)+\sum_{n\geq 0} \alpha_{n} d^+f_n(\hat{x})(u) \geq 0, \forall u\in E.$

Since  by assumption there exists $w\in E$ such that $ \sup_{n\geq 1} d^+ f_n(\hat{x})(w) < 0$, we have that $\alpha_0\neq0$. Thus, we obtain $(b)$ by setting $\beta_{\infty}:=\frac{\alpha_{\infty}}{\alpha_0}$ and $\beta_n:=\frac{\alpha_{n}}{\alpha_0}$ for all $n\in \N$.

$(b) \Longrightarrow (c)$. This part is given by using the invexity of $(f_n)$ since $f_n$ is convex for all $n\in \N$ (see Example \ref{ho} and the proof of Corollary \ref{cor3}). 

$(c) \Longrightarrow (a)$. This part is clear.
\end{proof}

\section{Examples} \label{SS6}
We end this article with two examples. The aim is to illustrate the use of the upper Dini-derivative in Corollary \ref{pourc}  in the absence of Gateaux-differentiability to find possible solutions to an optimisation problem with a finitely or a countable many inequality constraints.  

We need a few reminders from \cite{Ph}.  We denote $e_k:=(\delta_j^k)$ the elements of $\ell^{\infty}(\N)$ where $\delta_j^k$ is the Kronecker symbol satisfying $\delta_j^k=1$ if $j=k$ and $0$ if $j\neq k$. Let $p: \ell^{\infty}(\N) \longrightarrow \R$ be the function defined for all $x=(x_n)\in \ell^{\infty}(\N)$ by $p(x)=\limsup_n |x_n|.$ It is well-known (see \cite[Example 1.21]{Ph}) that $p$ is a ($1$-Lipschitz) continuous seminorm, nowhere G\^ateaux-differentiable.  On the other hand, it is easily seen that 
$$p(x+u)=p(x), \forall x\in \ell^{\infty}(\N), \forall u\in c_0(\N),$$
where $c_0(\N)$ is the subspace of $\ell^{\infty}(\N)$ consisting on all sequence converging to zero. It follows that  
$$\lim_{t\to 0^+}\frac{p(x + tu)-p(x)}{t}=0,  \forall x\in \ell^{\infty}(\N^*), \forall u \in c_0(\N).$$
Moreover, from \cite[Example 1 part 3)]{BL} since $p$ is convex, we have $D^+ p(x)(u)=d^+ p(x)(u)$ for all $x, u\in \ell^{\infty}(\N)$. Thus, in particular, 
\begin{eqnarray} \label{Ha1}
D^+ p(x)(u)=d^+ p(x)(u)=0, \forall x\in \ell^{\infty}(\N), \forall u \in c_0(\N).
\end{eqnarray}

\begin{example} \label{ex1} Define, for all $x=(x_n)\in \ell^{\infty}(\N)$,  $f_0(x):=p(x)+ h(x),$  where,
$$p(x)=\limsup_n |x_{n}|,$$
$$h(x)=-2\sum_{n\geq 1} 2^{-3n}x_{n}+x_0.$$ 
Consider the following problem, 
\begin{equation*}
(\mathcal{P}_\infty)
\left \{
\begin{array}
[c]{l}
\min f_0(x)\\
x\in B_{\ell^{\infty}(\N)}(0,1),\\
f_n(x):=2^{-n} x^2_{n}+p(x)-x_0 - 2^{-3n}\leq 0, \forall n\geq1.
\end{array}
\right. 
\end{equation*}
We are going to prove  thanks to the necessary and sufficient condition of optimality due to Corollary \ref{pourc} that  $(\mathcal{P}_\infty)$ has a unique solution $\hat{x}$ given by  $\hat{x}_0=0$, $x_{n}=2^{-n}$, for all $n\geq 1$.  However the functions $f_n$ ($n\geq 0$) are nowhere Gateaux-differentiable because of $p$.
\end{example}

\begin{proof} We see easily that the functions $p$ and $f_n$ ($n\geq 0$) are convex Lipchitz on $\Omega$  and that $f_n(x)\to f_{\infty}(x):=p(x)-x_0$ for all $x\in \Omega$ and $\|f_n-f_{\infty}\|_{\textnormal{Lip}}\to 0$, when $n\to +\infty$. We have for each $x\in \Omega$:
\begin{eqnarray} \label{HHa}
d^+ f_0(x)(u)=d^+p(x)(u)-2\sum_{n\geq 1} 2^{-3n}u_{n}+u_0, \forall u \in \ell^{\infty}(\N). 
\end{eqnarray}

\begin{eqnarray} \label{Hha2}
d^+ f_n(x)(u)=d^+p(x)(u)+ 2^{-n+1} x_{n}u_n-u_0, \forall u \in \ell^{\infty}(\N), \forall n\geq 1. 
\end{eqnarray}
 \begin{eqnarray} \label{Hha3}
d^+ f_{\infty}(x)(u)=d^+p(x)(u)- u_0, \forall u \in \ell^{\infty}(\N). 
\end{eqnarray}

 Clearly the sequence $(f_n)$ satisfies the property $(H)$ at every $x\in \Omega$. We observe  using (\ref{Ha1}) and $(\ref{Hha2})$  that $d^+f_n(\hat{x})(e_0)=-1$ for all $n\geq 1$ so by Corollary \ref{pourc},  $\hat{x}$ is a solution of $(\mathcal{P}_\infty)$ if and only if  there exits a sequence $(\beta_{\infty},\beta_0,\beta_1,...) $ such that:

$(i)$ $\beta_{\infty}\geq 0$, $\beta_n\geq 0$ for all $n\in \N$, 

$(ii)$ $\beta_{\infty} f_{\infty}(\hat{x})=0$ and $\beta_{n} f_n(\hat{x})= 0$ for all $n\geq 1$,

$(iii)$ $ d^+ f_0(\hat{x}) (u)+\beta_{\infty} d^+f_{\infty}(\hat{x})(u)+\sum_{n\geq 1} \beta_{n} d^+f_n(\hat{x})(u)\geq 0$.

\vskip5mm
$\bullet$ {\bf Necessary condition:} Suppose that $(i)$, $(ii)$ and $(iii)$ are satisfied. By applying $(iii)$ to $\pm u\in c_0(\N)$, the formulas $(\ref{HHa})$, $(\ref{Hha2})$, $(\ref{Hha3})$ and the fact that $d^+ p(\hat{x})(u)=0$ for all $u\in c_0(\N)$ (see (\ref{Ha1})), we get that 

$$ \left( -2\sum_{n\geq 1} 2^{-3n}u_{n}+u_0\right) -\beta_{\infty} u_0+\sum_{n\geq 1} \beta_{n} \left( 2^{-n+1} x_{n}u_n-u_0\right) = 0, \forall u\in c_0(\N).$$

In particular, we have

$(a)$ $1-\beta_{\infty} -\sum_{n\geq 1}\beta_n =0$ (with $u= e_0$). 

$(b)$ $-2^{-3n+1}+ 2^{-n+1}\beta_n\hat{x}_n =0$ (with $u= e_{n}$), for all $n\geq 1$. 

Necessarily $\beta_n\neq 0$ and $\hat{x}_n=\frac{2^{-2n}}{\beta_n}\geq 0$, by using $(b)$. Using $(ii)$ and the fact that $\beta_n\neq 0$ for all $n\geq 1$, we have $f_n(\hat{x})=0$ for all $n\geq 1$, that is $\hat{x}^2_{n}=2^n(2^{-3n}+\hat{x}_0-p(\hat{x}))=2^{-2n}+2^n(\hat{x}_0-p(\hat{x}))$. Since $(\hat{x}_n)$ is bounded, then we must necessarily have that $p(\hat{x})=\hat{x}_0$, which gives that $\hat{x}^2_{n}=2^{-2n}$ for all $n\geq 1$. Finally, we obtain $\hat{x}_0=p(\hat{x}):=\limsup_n |x_{n}|=0$, $\hat{x}_{n}=2^{-n}$, $\beta_n=2^{-n}$ for all $n\geq 1$ and $\beta_{\infty}=\frac{1}{2}$ (using $(a)$).

$\bullet$ {\bf Sufficient condition :} Suppose that $\hat{x}_0=0$, $\hat{x}_{n}=2^{-n}$, $\beta_n=2^{-n}$ for all $n\geq 1$ and $\beta_{\infty}=\frac{1}{2}$.  We prove that $\hat{x}$ satisfies $(i)$, $(ii)$ and $(iii)$, which shows by Corollary \ref{pourc} that this point is a solution of $ (\mathcal{P}_\infty)$. Clearly $(i)$ and $(ii)$ are satisfied. To see that $(iii)$ is also satisfied, we use the formulas $(\ref{HHa})$, $(\ref{Hha2})$, $(\ref{Hha3})$ and we obtain that for all $u\in \ell^{\infty}(\N)$,
$$d^+ f_0(\hat{x}) (u)+\beta_{\infty} d^+f_{\infty}(\hat{x})(u)+\sum_{n\geq 1} \beta_{n} d^+f_n(\hat{x})(u)=2d^+ p(\hat{x})(u).$$
On the other hand, since $p(\hat{x})=0$ and $p$ is positive, we see from the definition that $d^+ p(\hat{x})(u)\geq 0$ for all $u\in \ell^{\infty}(\N)$. Hence, we obtain $(iii)$. 
\vskip5mm
Finally, we proved thanks to Corollary \ref{pourc} that $\hat{x}$ defined by  $\hat{x}_0=0$, $x_{n}=2^{-n}$, for all $n\geq 1$ is the unique solution of the problem $(\mathcal{P}_\infty)$.
\end{proof}

\begin{example} \label{ex2}

Let $L: \R^{2N+2}\to \R$ be the linear continuous functional defined by $L(x)=\sum_{n=0}^{2N+1}  x_n$, for all $x=(x_0,...,x_{2N+1})\in \R^{2N+2}$. We denote $\|\cdot\|_2$, $\|\cdot\|_1$ and $\|\cdot\|_{\infty}$, the classical $\ell_2$-norm and $\ell_1$-norm of $\R^{N+1}$ respectively and $\|\cdot\|_{\infty}$ the $\ell_\infty$-norm on $\R^{2N+2}$. Set $\Omega:=\{x\in \R^{2N+2}: \|x\|_{\infty}<1\}$. Consider the problem:
\begin{equation*}
(\mathcal{P}_N)
\left \{
\begin{array}
[c]{l}
\min f_0(x) :=\|(x_0,x_2,...,x_{2N})\|_2^2 +\|(x_1,x_3,...,x_{2N+1})\|_1 -L(x)\\
x\in \Omega,\\
f_{n+1}(x):= (n+2)x_{2n}+(n+1) x_{2n+1} +1 \leq 0, \forall 0\leq n\leq N.
\end{array}
\right. 
\end{equation*}
We are going to prove  thanks to the necessary and sufficient condition of optimality due to Corollary \ref{pourc} that  $(\mathcal{P}_N)$ has a unique solution $\hat{x}$ given by  $\hat{x}_{2n}=\frac{-1}{(n+2)}$, $x_{2n+1}=0$, for all $0\leq n \leq N$.  It is clear that $f_n$ is Lipschitz and convex on $\Omega$, for all $0\leq n\leq N+1$. However, because of the norm $\|\cdot\|_1$, we see that $f_0$ is not Gateaux-differentiable at $x$, whenever there exists some $0\leq n_0\leq N$ such that $x_{2n_0+1}=0$. Recall that 
$$d^+ \|\cdot\|_1(0)(u)=\|u\|_1, \forall u\in \R^{N+1}.$$
\end{example}

\begin{proof}  Let $\pi: z\mapsto |z|$, for all $z\in \R$. Clearly,
\begin{eqnarray}\label{pi}
d^+\pi(z)(s)=
\left \{
\begin{array}
[c]{l}
s \textnormal{ if } z>0\\
-s  \textnormal{ if } z<0,\\
|s|  \textnormal{ if } z=0.
\end{array}
\right. 
\end{eqnarray}
Let $(e_n)_{0\leq n\leq 2N+1}$ be the canonical basis of $\R^{2N+2}$. It is easy to see that for every $x\in \R^{2N+2}$ and for every $0\leq n\leq N$, we have 
$$d^+ f_0 (x)(h e_{2n}+ke_{2n+1})=(2x_{2n}-1)h+d^+\pi(x_{2n+1})(k)-k, \forall (h,k)\in \R^2.$$

Consider the stationary sequence $(\widetilde{f}_n)$, where $\widetilde{f}_{n}=f_{n}$ for $0\leq n \leq N$ and $\widetilde{f}_{n}=f_{N+1}$ for all $n\geq N+1$ so that  $\widetilde{f}_\infty=f_{N+1}$.  Clearly, for every $x\in \R^{2N+2}$ we have that $(\widetilde{f}_n)$ satisfies the property $(H)$ at $x$ and for all $n\geq 0$, $d_G \widetilde{f}_{n+1}(x)(w)=\widetilde{f}_{n+1}(w)<0$ with $w=(-1,-1,...,-1)$. 

We apply Corollary \ref{pourc} to the sequence $(\widetilde{f}_n)$. Thus, $\hat{x}\in \R^{2N+2}$ is a solution of $(\mathcal{P}_N)$  if and only if there exits a real numbers $(\beta_0,\beta_1,...\beta_{N+1})$ such that:

$(i)$ $\beta_{n+1}\geq 0$ for all $1\leq n\leq N+1$, 

$(ii)$ $\beta_{n} f_n(\hat{x})= 0$ for all $1\leq n\leq N+1$,

$(iii)$ $ d^+ f_0(\hat{x}) (u)+\sum_{n= 0}^{N} \beta_{n+1} d_G f_{n+1}(\hat{x})(u)\geq 0$.

$\bullet$ {\bf Necessary condition:} Suppose that $(i)$-$(iii)$ are satisfied.  From $(iii)$, we get that for every $0\leq n\leq N$,
$$(2\hat{x}_{2n}-1)h+d^+\pi(\hat{x}_{2n+1})(k)-k+\beta_{n+1}((n+2)h+(n+1)k)\geq 0,   \forall (h,k)\in \R^2.$$
This implies that $(2\hat{x}_{2n}-1)h+(n+2)h\beta_{n+1}\geq 0$ for all $h\in \R$ (with $k=0$) and $d^+\pi(\hat{x}_{2n+1})(k)-k(1-(n+1)\beta_{n+1})\geq 0$, for all $k\in \R$ (with $h=0$). 

From the inequality $(2\hat{x}_{2n}-1)h+(n+2)h\beta_{n+1}\geq 0$ for all $h\in \R$, we get 
$$\hat{x}_{2n}=\frac{1-(n+2)\beta_{n+1}}{2}, \forall 0\leq n\leq N.$$ 
Now, consider the inequality $d^+\pi(\hat{x}_{2n+1})(k)-k(1-(n+1)\beta_{n+1})\geq 0$, for all $k\in \R$. Then, using $(\ref{pi})$:

$\bullet$ If $\hat{x}_{2n+1}>0$, then we get that $k(n+1)\beta_{n+1}\geq 0$ for all $k\in \R$ this implies that $\beta_{n+1}=0$. Thus, $x=\frac{1}{2}$ but in this case we have $f_{n+1}(\hat{x})=(n+2)\hat{x}_{2n}+(n+1) \hat{x}_{2n+1} +1>0$ which is a contradiction. 

$\bullet$ If $\hat{x}_{2n+1}<0$, then we get that $k(-2+(n+1)\beta_{n+1})\geq 0$ for all $k\in \R$ which implies that $\beta_{n+1} =\frac{2}{n+1}>0$ and so $\hat{x}_{2n}=\frac{-(n+3)}{2(n+1)}$. From $(ii)$ we get that $f_{n+1}(\hat{x})=0$, that is, $\hat{x}_{2n+1}=\frac{-(n+2)\hat{x}_{2n}-1}{n+1}=\frac{(n+2)(n+3)-2(n+1)}{2(n+1)^2}\geq0$ which is a contradiction.  

$\bullet$ Hence, $\hat{x}_{2n+1}=0$, so from $(ii)$, we get $\beta_{n+1}((n+2)\hat{x}_{2n}+1)=0$. If $\beta_{n+1}=0$, we have that $\hat{x}_{2n}=\frac{1}{2}$, but in this case $(n+2)\hat{x}_{2n}+(n+1)\hat{x}_{2n+1}+1=\frac{n+2}{2}+1>0$, which is a contradiction. Thus, $\beta_{n+1} \neq 0$ and so $\hat{x}_{2n}=\frac{-1}{n+2}$. 

Thus, we proved that the only candidate to be a solution of $ (\mathcal{P}_N)$ is $\hat{x}$ given by  $\hat{x}_{2n}=\frac{-1}{(n+2)}$, $\hat{x}_{2n+1}=0$, with the Lagrange multipliers $\beta_{n+1}=\frac{1}{n+2}\left(1+\frac{2}{n+2}\right)$, for all $n\in \{0,...,N\}$.

$\bullet$ {\bf Sufficient condition :}  Suppose that $\hat{x}$ is given by  $\hat{x}_{2n}=\frac{-1}{(n+2)}$, $\hat{x}_{2n+1}=0$ and $\beta_{n+1}=\frac{1}{n+2}\left(1+\frac{2}{n+2}\right)$, for all $0\leq n \leq N$. We prove that $\hat{x}$ satisfies $(i)$, $(ii)$ and $(iii)$, which shows by Corollary \ref{pourc} that this point is a solution of $ (\mathcal{P}_N)$. The parts $(i)$ and $(ii)$ are clear. We prove  $(iii)$. Indeed, let $u\in \R^{2N+2}$, we have

\begin{eqnarray*}
d^+ f_0 (\hat{x})(u) &=& -2\sum_{n=0}^{N} \frac{1}{(n+2)} u_{2n} +\sum_{n=0}^N |u_{2n+1}| -\sum_{n=0}^{2N+1} u_n\\
&=&\left( -2\sum_{n=0}^{N} \frac{1}{(n+2)} u_{2n} -\sum_{n=0}^{N} u_{2n} \right) +\left (\sum_{n=0}^N |u_{2n+1}| -\sum_{n=0}^{N} u_{2n+1}\right)\\
&=& -\sum_{n=0}^{N} \left(1+\frac{2}{n+2}\right) u_{2n}+\left (\sum_{n=0}^N |u_{2n+1}| -\sum_{n=0}^{N} u_{2n+1}\right).
\end{eqnarray*}

\begin{eqnarray*}
\sum_{n=0}^{N} \beta_{n+1} d_G f_{n+1}(\hat{x})(u)&= &\sum_{n=0}^{N} \left(1+\frac{2}{n+2}\right) u_{2n}+\sum_{n=0}^{N} \frac{n+1}{n+2}\left(1+\frac{2}{n+2}\right)u_{2n+1}.
\end{eqnarray*}
Thus,
\begin{eqnarray*}
d^+f_0(\hat{x})(u)+\sum_{n= 0}^{N} \beta_{n+1} d_G f_{n+1}(\hat{x})(u) = \sum_{n=0}^{N}\left(|u_{2n+1}|-\left [1-\frac{n+1}{n+2}\left(1+\frac{2}{n+2}\right)\right ]u_{2n+1} \right).
\end{eqnarray*}
Since, $\left |1-\frac{n+1}{n+2}\left(1+\frac{2}{n+2}\right)\right |\leq 1$, for every $n\in \N$, we get that $d^+f_0(\hat{x})(u)+\sum_{n= 0}^{N} \beta_{n+1} d_G f_{n+1}(\hat{x})(u)\geq 0$ for all $u\in \R^{2N+2}$. Hence $(iii)$ is satisfied. 
\vskip5mm
Finally, we proved that  $\hat{x}$ given by  $\hat{x}_{2n}=\frac{-1}{(n+2)}$, $\hat{x}_{2n+1}=0$ for all $0\leq n \leq N$, is the unique solution of $ (\mathcal{P}_N)$.
\end{proof}

\section*{Acknowledgement}
This research has been conducted within the FP2M federation (CNRS FR 2036) and  SAMM Laboratory of the University Paris 1 Panthéon-Sorbonne.

\bibliographystyle{amsplain}

\end{document}